\documentclass[11pt]{article}

\setlength{\textwidth}{31pc}
\setlength{\textheight}{48pc}
\oddsidemargin=.65in
\evensidemargin=.65in

\usepackage{geometry}
\usepackage{amsmath,amsfonts,graphicx,framed}
\usepackage{amssymb}
\usepackage{amsthm}
\usepackage{fancyhdr}
\usepackage{float}
\usepackage{caption}
\usepackage{comment}
\usepackage[utf8]{inputenc}
\usepackage{authblk}
\usepackage[mathscr]{euscript}
\usepackage{mathtools}
\usepackage{xcolor}

\theoremstyle{plain}
\newtheorem{thm}{Theorem}[section]
\newtheorem{lem}{Lemma}[section]
\newtheorem{cor}{Corollary}[section]

\newtheorem{prop}{Proposition}[section]
\theoremstyle{definition}

\newtheorem{rem}{Remark}[section]

\numberwithin{equation}{section}



\newcommand{\pd}{\partial}

\newcommand{\mbb}{\mathbb}

\newcommand{\ep}{\varepsilon}
\newcommand{\vp}{\varphi}
\newcommand{\re}{\mbb R}
\newcommand{\al}{\alpha}

\newcommand{\Om}{\Omega}
\newcommand{\eqal}[1]{\begin{equation}\begin{aligned}#1\end{aligned}\end{equation}}

\def\Xint#1{\mathchoice
{\XXint\displaystyle\textstyle{#1}}%
{\XXint\textstyle\scriptstyle{#1}}%
{\XXint\scriptstyle\scriptscriptstyle{#1}}%
{\XXint\scriptscriptstyle\scriptscriptstyle{#1}}%
\!\int}
\def\XXint#1#2#3{{\setbox0=\hbox{$#1{#2#3}{\int}$ }
\vcenter{\hbox{$#2#3$ }}\kern-.6\wd0}}

\def\dashint{\Xint-}

\begin{document}

\title{Hessian estimates for the sigma-2 equation in dimension four}

\author{Ravi Shankar and Yu Yuan}

\date{}

\maketitle

\begin{abstract}
We derive a priori interior Hessian estimates and interior regularity for the $\sigma_2$ equation in dimension four. Our method provides respectively a new proof for the corresponding three dimensional results and a Hessian estimate for smooth solutions satisfying a dynamic semi-convexity condition in higher $n\ge 5$ dimensions.
\end{abstract}
\maketitle

\section{Introduction}

\footnotetext[1]{\today}

In this article, we resolve the question of the interior a priori Hessian estimate
and regularity for the $\sigma_{2}$ equation
\begin{equation}
\sigma_{2}\left(  D^{2}u\right)  =\sum_{1\leq i<j\leq n}\lambda_{i}\lambda
_{j}=1 \label{s2}%
\end{equation}
in dimension $n=4,$ where $\lambda_{i}'s$ are the eigenvalues of the
Hessian $D^{2}u.$

\begin{thm}
\label{thm:s2}
Let $u$ be a smooth solution to (\ref{s2}) in the positive branch
$\bigtriangleup u>0$ on $B_{1}(0)\subset\mathbb{R}^{4}$.  Then $u$ has an
implicit Hessian estimate
\[
|D^{2}u(0)|\leq C(\left\Vert u\right\Vert _{C^{1}\left(  B_{1}\left(
0\right)  \right)  })\ \ \
\]
with$\ \left\Vert u\right\Vert _{C^{1}\left(  B_{1}\left(  0\right)  \right)
}=\left\Vert u\right\Vert _{L^{\infty}\left(  B_{1}\left(  0\right)  \right)
}+\left\Vert Du\right\Vert _{L^{\infty}\left(  B_{1}\left(  0\right)  \right)
}.$
\end{thm}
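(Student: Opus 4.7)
I would follow the Warren--Yuan template for Hessian estimates of $\sigma_k$-type equations: derive a pointwise Jacobi inequality for a function of the largest Hessian eigenvalue with respect to a suitable linearized operator, convert it to an integral estimate using a Michael--Simon Sobolev inequality on the Lagrangian graph of $Du$, and close the argument with a Moser iteration. The same mechanism recovers the three dimensional result of Warren--Yuan; in dimension four it is the threshold case, which is why (as the abstract advertises) an extra semi-convexity assumption becomes necessary for $n\geq 5$.

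The first step is the pointwise Jacobi inequality. Linearizing (\ref{s2}) gives the operator $L=F^{ij}\partial_{ij}$ with coefficients $F^{ij}=(\Delta u)\delta_{ij}-u_{ij}$; on the positive branch $\Delta u>0$ these coefficients are positive definite since the eigenvalues $\Delta u-\lambda_k$ of $F$ are all positive by virtue of $\sigma_2=1$. Setting $b=\log\lambda_{\max}(D^2u)$ (and using a standard regularization when $\lambda_{\max}$ has multiplicity, or alternatively working with $\log\cosh\lambda_{\max}$), I would establish
\[
L b \,\geq\, c(n)\, F^{ij}b_i b_j
\]
at points where $\lambda_{\max}$ is sufficiently large, with $c(n)>0$ for $n\leq 4$. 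The computation is the standard one: differentiate (\ref{s2}) twice to get $F^{ij}u_{ijk}=0$ and an identity for $F^{ij}u_{ijkk}$, expand $L b$ in an orthonormal frame diagonalizing $D^2 u$, and absorb the cross terms between $\lambda_{\max}$ and the other eigenvalues using the constraint $\sigma_2=1$ together with $\Delta u>0$.

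Next, I would convert this pointwise inequality into an integral one. Because the Newton tensor $F^{ij}$ of $\sigma_2$ is divergence free, $\partial_j F^{ij}=0$, multiplication of the Jacobi inequality by $\eta^{2}e^{pb}$ with a Lipschitz cutoff $\eta$ and integration by parts yields a Caccioppoli-type inequality in divergence form. Applying the Michael--Simon Sobolev inequality on the Lagrangian graph of $Du$ with its induced volume form $\sqrt{\det(I+(D^2u)^2)}\,dx$ then produces an $L^p$ estimate $\int_{B_r} e^{pb}\,dV \leq C(p,\|u\|_{C^1(B_1)})$ for a range of exponents. A Moser iteration finally upgrades this integral estimate to the pointwise bound $\sup_{B_{1/2}} b \leq C$, which is exactly the claimed Hessian estimate.

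The main obstacle lies in step one at $n=4$: establishing strict positivity of the constant $c(n)$ without any extra semi-convexity hypothesis. The bad terms in $L b$ arise from products $F^{kk}b_k^{2}$ along eigendirections with small or negative eigenvalues, and they must be dominated by the good $F^{ij}b_ib_j$ contribution. For $n=3$ this domination is elementary; for $n=4$ it requires fully exploiting $\sigma_2=1$ together with $\Delta u>0$, with essentially no room to spare---this is precisely why the argument fails at $n\geq 5$ unless a dynamic semi-convexity condition is imposed. A secondary, technical issue is the borderline Sobolev exponent in four dimensions, which I would handle either by iterating with test functions of $L\log L$ type or by using that $\Delta u$ is itself a subsolution of the divergence-form linearized equation to squeeze an extra power through the iteration.
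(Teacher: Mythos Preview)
Your plan follows the Warren--Yuan dimension-three template, but both of its pillars collapse in dimension four.

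First, the Jacobi inequality with a \emph{fixed} positive constant $c(4)>0$ is not available. The paper's Proposition~\ref{prop:Jac} establishes only an \emph{almost} Jacobi inequality
\[
\Delta_F b \ge \ep\,|\nabla_F b|^2,\qquad \ep=\tfrac{2}{9}\Big(\tfrac{1}{2}+\tfrac{\lambda_{\min}}{\Delta u}\Big),
\]
where $\ep>0$ by the sharp bound $\lambda_{\min}/\Delta u>-\tfrac12$ (Lemma~\ref{lem:sharp}), but $\ep$ degenerates to zero along the configurations $(a,a,a,-a+O(1/a))$. Your assertion that ``for $n=4$ it requires fully exploiting $\sigma_2=1$ together with $\Delta u>0$, with essentially no room to spare'' is precisely where the argument breaks: there is no room at all. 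A Moser iteration driven by $Lb\ge c|\nabla_F b|^2$ with fixed $c>0$ therefore cannot close.

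Second, the Michael--Simon Sobolev step has no footing. In three dimensions $\sigma_2=1$ is a special Lagrangian equation, so the gradient graph is a minimal submanifold and Michael--Simon applies; this is exactly the minimal-surface structure exploited in \cite{WY}. In four dimensions the equation is not special Lagrangian and, as the introduction notes, ``any effective geometric structure of \eqref{s2} appears hidden.'' You have no mean-curvature control on the graph, so no Sobolev inequality with the right weight.

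The paper's route is quite different. The degenerating $\ep$ is absorbed not by an integral method but by a pointwise maximum-principle argument (Proposition~\ref{prop:doub}) in the spirit of Qiu: when $\ep$ is bounded below, Qiu's computation yields a doubling inequality; when $\ep\to 0$, the equation becomes conformally uniformly elliptic and the usually-useless term $\beta f_i\lambda_i^2$ from $\Delta_F|Du|^2$ dominates instead. This two-case argument produces a three-sphere inequality $\sup_{B_2}\Delta u\le C\sup_{B_1}\Delta u$, iterated to control the Hessian by its supremum on an arbitrarily small ball (Corollary~\ref{cor:doub}). The proof is then closed by compactness: an Alexandrov-type theorem for viscosity solutions (Proposition~\ref{prop:Alex}, using the gradient estimate of \cite{T2,CW} in place of the H\"older estimate of \cite{CT}) supplies a single twice-differentiable point, Savin's small perturbation theorem \cite{S} upgrades this to a smooth ball, and doubling propagates the bound outward.
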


From the gradient estimate for $\sigma_{k}$-equations by Trudinger \cite{T2} and
also Chou-Wang \cite{CW} in the mid 1990s, we can bound $D^{2}u$ in terms of the
solution $u$ in $B_{2}\left(  0\right)  $ as%
\[
|D^{2}u(0)|\leq C(\left\Vert u\right\Vert _{L^{\infty}\left(  B_{2}\left(
0\right)  \right)  }).
\]

In higher $n\geq5$ dimensions, our method provides a Hessian estimate for
smooth solutions satisfying a semi-convexity type condition with movable lower
bound (\ref{dscx}), which is unconditionally valid in four dimensions by \eqref{sharp}.

\begin{thm}
\label{thm:n5}
Let $u$ be a smooth solution to (\ref{s2}) in the positive branch
$\bigtriangleup u>0$ on $B_{1}(0)\subset\mathbb{R}^{n}$ with $n\geq5,$
satisfying a dynamic semi-convex condition%
\begin{equation}
\lambda_{\min}\left(  D^{2}u\right)  \geq-c\left(  n\right)  \bigtriangleup
u\ \ \ \text{with \ \ }c\left(  n\right)  =\frac{\sqrt{3n^{2}+1}-n+1}{2n}.
\label{dscx}%
\end{equation}
Then $u$ has an implicit Hessian estimate
\[
|D^{2}u(0)|\leq C(n,\left\Vert u\right\Vert _{L^{\infty}\left(  B_{1}\left(
0\right)  \right)  }).
\]

\end{thm}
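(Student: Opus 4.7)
The plan is a maximum-principle argument on a Pogorelov-type auxiliary function involving $\log\lambda_{\max}(D^{2}u)$, in the spirit of Pogorelov, Chou-Wang \cite{CW}, and Warren-Yuan. The dynamic semi-convexity (\ref{dscx}) will enter only at one point, as the sharp algebraic hypothesis needed to force a certain coefficient in the resulting Jacobi inequality to be non-negative.

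First I would consider
\[
W(x) \;=\; \phi(x)\log\lambda_{\max}\bigl(D^{2}u(x)\bigr) \;+\; A\,|Du(x)|^{2},
\]
where $\phi$ is a cutoff supported in $B_{1}$ and $A$ is a large constant to be chosen in terms of $\|u\|_{L^{\infty}}$; the ambient $C^{1}$-dependence will be absorbed at the end via the gradient estimate of Trudinger \cite{T2} and Chou-Wang \cite{CW}. Let $x_{0}$ be an interior maximum of $W$ and, after rotation, assume $D^{2}u(x_{0})$ is diagonal with $\lambda_{1}\ge\dots\ge\lambda_{n}$ and $\lambda_{1}=\lambda_{\max}$. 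The linearized operator $L=F^{ij}\partial_{ij}$ of $\sigma_{2}=1$ has diagonal entries $F^{ii}=\Delta u-\lambda_{i}$, positive on the branch $\Delta u>0$. The working inequalities are $\nabla W(x_{0})=0$ and $LW(x_{0})\le 0$.

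Next I would expand, by standard eigenvalue perturbation, the Jacobi-type formula
\[
L\log\lambda_{1} \;=\; \frac{F^{ii}u_{11ii}}{\lambda_{1}} \;-\; \frac{F^{ii}u_{11i}^{2}}{\lambda_{1}^{2}} \;+\; \frac{2}{\lambda_{1}}\sum_{k>1}\frac{F^{ii}u_{1ki}^{2}}{\lambda_{1}-\lambda_{k}}.
\]
Differentiating $\sigma_{2}(D^{2}u)=1$ twice along $e_{1}$ replaces the first term by $-\lambda_{1}^{-1}F^{ij,rs}u_{ij1}u_{rs1}$, which is non-negative on the positive branch by concavity of $\sigma_{2}^{1/2}$. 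The first-order condition rewrites each $u_{11i}/\lambda_{1}$ in terms of $\nabla\phi/\phi$ and the $C^{1}$-controlled quantity $u_{ij}u_{j}$, so the $i=1$ part of the bad sum is directly under control. The remaining $i>1$ part must be cancelled against the Codazzi contribution from the third sum via the symmetry $u_{1ki}=u_{11k}$ when one of the free indices is $1$.

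The main obstacle, and the only place where (\ref{dscx}) is used, is the sign of the coefficient multiplying $u_{11k}^{2}/\lambda_{1}^{2}$ for each $k>1$ after this pairing: schematically it takes the form $2F^{11}\lambda_{1}/(\lambda_{1}-\lambda_{k})-F^{kk}$. Writing out the condition that this be non-negative for every admissible $\lambda_{1}/\Delta u\in(0,1]$ reduces to a quadratic in $\lambda_{k}/\Delta u$, and the sharp threshold is exactly the root $c(n)=(\sqrt{3n^{2}+1}-n+1)/(2n)$ appearing in (\ref{dscx}). Once this pointwise inequality is secured, the positive contribution $A\,L(|Du|^{2})=2A\sum F^{ii}\lambda_{i}^{2}+\text{l.o.t.}$ dominates the cutoff errors and yields $LW(x_{0})\ge c_{1}\lambda_{1}(x_{0})-C(\|u\|_{C^{1}})$. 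Combined with $LW(x_{0})\le 0$, this forces $\lambda_{\max}(x_{0})\le C$, hence $W(0)\le C$, and a final application of the gradient estimate replaces $\|u\|_{C^{1}}$ by $\|u\|_{L^{\infty}}$ as stated.
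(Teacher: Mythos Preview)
Your central algebraic claim is incorrect. You assert that under \eqref{dscx} the coefficient
\[
\frac{2F^{11}\lambda_1}{\lambda_1-\lambda_k}-F^{kk}
\]
multiplying $u_{11k}^2/\lambda_1^2$ is non-negative, with sharp threshold $c(n)$. But $F^{11}=\sigma_1-\lambda_1$ can be as small as $1/\sigma_1$ (Corollary~\ref{cor:ellipticity}), while $F^{kk}(\lambda_1-\lambda_k)$ is of order $\sigma_1^2$ whenever $\lambda_k/\sigma_1$ is bounded away from $\lambda_1/\sigma_1$; the single Codazzi term you extract is far too weak to cancel it. Concretely, for $n=5$ the configuration $\lambda\approx(2t,\,0.2t,\,0.2t,\,0.2t,\,-0.51t)$ (with an $O(1/t)$ correction to force $\sigma_2=1$) lies well inside the region \eqref{dscx}, has $\lambda_1/\sigma_1\approx 0.96$ and $\lambda_5/\sigma_1\approx-0.24$, yet your coefficient equals roughly $-1.4\,\sigma_1<0$. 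The quadratic in $\lambda_k/\Delta u$ you write down simply does not have $-c(n)$ as a root.

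The constant $c(n)$ in fact comes from an entirely different computation. The paper works with $b=\ln\Delta u$, not $\ln\lambda_{\max}$, and after projecting the full third-order quadratic form onto a two-dimensional $\{E,L\}$ subspace (Proposition~\ref{prop:Jac}, Step~2) must check positivity of a $2\times 2$ determinant; this reduces to a quadratic $q_1(y)$ in $y=f_i/\sigma_1$ whose larger root is $y_n^+=(n+1+\sqrt{3n^2+1})/(2n)$, and the condition $f_n/\sigma_1\le y_n^+$ is exactly \eqref{dscx}. Even with this in hand, the resulting almost-Jacobi inequality $\Delta_Fb\ge\ep\,|\nabla_Fb|^2$ has $\ep=\ep(D^2u)$ degenerating to zero on the boundary of the admissible region, so a direct Pogorelov bound is still unavailable. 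Instead the paper runs Qiu's test function with a case split (when $\ep$ is small the $\beta\,\Delta_F|Du|^2$ term takes over, since there $f_n\lambda_n^2\gtrsim\sigma_1^3$) to obtain only a \emph{doubling} inequality $\sup_{B_2}\Delta u\le C\sup_{B_1}\Delta u$, and then closes by compactness: Alexandrov twice-differentiability of a limit viscosity solution supplies a smooth point, Savin's small perturbation theorem upgrades it to a uniformly smooth ball, and iterated doubling propagates the bound outward. All three of these ingredients are absent from your proposal.
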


One application of the above estimates is the interior regularity
(analyticity) of $C^{0}$ viscosity solutions to (\ref{s2}) in four dimensions,
when the estimates are combined with the solvability of the Dirichlet problem
with $C^{4}$ boundary data by Caffarelli-Nirenberg-Spruck \cite{CNS} and also
Trudinger \cite{T1}. In particular, the solutions of the Dirichlet problem with
$C^{0}$ boundary data to four dimensional (\ref{s2}) of both positive branch
$\bigtriangleup u>0$ and negative branch $\bigtriangleup u<0$ respectively,
enjoy interior regularity.

Another consequence is a rigidity result for entire solutions to (\ref{s2}) of
both branches with quadratic growth, namely all such solutions must be
quadratic, provided the smooth solutions in dimension $n\geq5$ also satisfying
the dynamic semi-convex assumption (\ref{dscx}), or the symmetric one
$\lambda_{\max}\left(  D^{2}u\right)  \leq -c\left(  n\right)  \bigtriangleup
u$ in the symmetric negative branch case. Warren's rare saddle entire solution
to (\ref{s2}) shows certain convexity condition is necessary \cite{W}. Other
earlier related results can be found in \cite{BCGJ} \cite{Y1} \cite{CY} \cite{CX} \cite{SY3}.

In two dimensions, an interior Hessian bound for \eqref{s2}, the
Monge-Amp\`{e}re equation $\sigma_{2}=\det D^{2}u=1$ was found via isothermal coordinates, which are readily available under Legendre-Lewy transform, by
Heinz \cite{H} in the 1950s. The dimension three case was done via the
minimal surface structure of equation (\ref{s2}) and a full strength Jacobi inequality by Warren-Yuan in the late 2000s \cite{WY}. In higher dimensions $n\geq4$ any effective geometric structure of (\ref{s2}) appears hidden, although the level set of non-uniformly elliptic equation (\ref{s2}) is convex.

In recent years, Hessian estimates for convex smooth solutions of (\ref{s2})
have been obtained via a pointwise approach by Guan and Qiu \cite{GQ}. Hessian
estimates for almost convex smooth solutions of (\ref{s2}) have been derived
by a compactness argument in \cite{MSY}, and for semi-convex smooth solutions
in \cite{SY1} by an integral method. However, we cannot extend these a priori
estimates, including Theorem 1.2, to interior regularity statements for
viscosity solutions of (\ref{s2}), because the smooth approximations may not
preserve the convexity or semi-convexity constraints. Taking advantage of an
improved regularity property for the equation satisfied by the Legendre-Lewy
transform of almost convex viscosity solutions, interior regularity was
reached in \cite{SY2}.

For higher order $\sigma_{k}\left(  D^{2}u\right)  =1$ with $k\geq3$
equations, which is the Monge-Amp\`{e}re equation in $k$ dimensions, there are
the famous singular solutions constructed by Pogorelov \cite{P} in the 1970s, and
later generalized in \cite{U1}. Worse singular solutions have been produced in
recent years. Hessian estimates for solutions with certain strict $k$-convexity
constraints to Monge-Amp\`{e}re equations and $\sigma_{k}$ equation ($k\geq2$)
were derived by Pogorelov \cite{P} and Chou-Wang \cite{CW} respectively using the
Pogorelov's pointwise technique. Urbas \cite{U2} \cite{U3} obtained (pointwise) Hessian
estimates in term of certain integrals of the Hessian for $\sigma_{k}$ equations.  Recently, Mooney \cite{M2} derived the strict
2-convexity of convex viscosity solutions to (\ref{s2}), consequently, relying on
the solvability \cite{CNS} and a priori estimates \cite{CW}, gave a different proof of
the interior regularity of those convex viscosity solutions.

\smallskip
Our proof of Theorem \ref{thm:s2} synthesizes the ideas of Qiu \cite{Q} with Chaudhuri-Trudinger \cite{CT} and Savin \cite{S}.  Qiu showed that in dimension three, where a Jacobi inequality is valid (see Section \ref{sec:Jac} for definitions of the operators)
$$
F_{ij}\pd_{ij}\ln\Delta u\ge \ep F_{ij}(\ln\Delta u)_i(\ln\Delta u)_j,
$$
a maximum principle argument leads to a doubling, or ``three-sphere'' inequality:
$$
\sup_{B_1(0)}\Delta u\le C(n,\|u\|_{C^1(B_2(0))})\sup_{B_{1/2}(0)}\Delta u.
$$
(A lower bound condition on $\sigma_3(D^2u),$ satisfied by convex solutions of  (\ref{s2}) in general dimensions permitted Guan-Qiu to exclude the inner ``sphere" term $B_{1/2}(0)$ in the above inequality for their eventual Hessian estimates earlier in \cite{GQ}.) Iterating this ``three-sphere'' inequality shows that the Hessian is controlled by its maximum on any arbitrarily small ball.  To put it another way, any blowup point propagates to a dense subset of $B_1(0)$.  To rule out Weierstrass nowhere twice differentiable counterexamples, it suffices to find a single smooth point; Savin's small perturbation theorem \cite{S} guarantees a smooth ball if there is a smooth point.  It more than suffices to establish partial regularity, such as Alexandrov's theorem.  Chaudhuri and Trudinger \cite{CT} showed that $k$-convex functions have an Alexandrov theorem if $k>n/2$.  This gives a new compactness proof of Hessian estimate and regularity for \eqref{s2} in dimension three without minimal surface arguments, and also Hessian estimate for \eqref{s2} in general dimensions with semi-convexity assumption in \cite{SY1}, where a Jacobi inequality and Alexandrov twice differentiability are available.

\smallskip
In higher dimensions $n\ge 4$, there are three new difficulties.  Although the H\"older estimate for $k$-convex \textit{functions} may not be valid for $k\le n/2$, we can replace it with the interior gradient estimate for 2-convex \textit{solutions} in \cite{T2} \cite{CW}; this gives an Alexandrov theorem. The main hurdle is the Jacobi inequality, which fails for four and higher dimensions without a priori control on the minimum eigenvalue $\lambda_{min}$ of $D^2u$; the Jacobi inequality was discovered in \cite{SY1, SY3} for semi-convex solutions.  Instead, we can only establish an ``almost-Jacobi inequality", where $\ep\sim 1+2\lambda_{min}/\Delta u$ in four dimensions.  This choice of $\ep$ degenerates to zero for the extreme configurations $(\lambda_1,\lambda_2,\lambda_{3},\lambda_4)=(a,a,a,-a+O(1/a))$.  At first glance, $\ep\to 0$ means Qiu's maximum principle argument fails; the positive term $\ep|\nabla_F b|^2$ can no longer absorb bad terms.  On the other hand, for the extreme configurations, the equation becomes conformally uniformly elliptic.  The, usually defective, lower order term $\Delta_F|Du|^2\gtrsim \sigma_1\lambda_{min}^2$, is large enough to take control of the bad terms.  The dynamic semi-convexity assumption \eqref{dscx} allows the outlined four dimensional arguments to continue working in higher $n\ge5$ dimensions.

\smallskip
Using similar methods, a new proof of regularity for strictly convex solutions to the Monge-Amp\`ere equation is found in \cite{SY4}.  Extrinsic curvature estimates for the scalar curvature equation in dimension four are found in \cite{Sh}, extending the dimension three result of Qiu \cite{Q1}.  In forthcoming work, investigation will be done on conformal geometry's $\sigma_2$ Schouten tensor equation with negative scalar curvature and the improvement of the $W^{2,6+\delta}$ to $C^{1,1}$ estimate in \cite{D} to a $W^{2,6}$ to $C^{1,1}$ estimate.

\smallskip
In still higher dimensions $n\ge 5$, we are not even able to establish that $\ln\Delta u$ is sub-harmonic, $\ep\ge 0$, without a priori conditions on the Hessian.  There is still the problem of regularity for such solutions.  Combining the Alexandrov theorem with small perturbation [S, Theorem 1.3] only shows that the singular set is closed with Lebesgue measure zero.


\section{Almost Jacobi inequality}
\label{sec:Jac}

In \cite{SY3}, we established a Jacobi inequality for $b=\ln(\Delta u+C(n,K))$ under the semi-convexity assumption $\lambda_{min}(D^2u)\ge -K$, namely the quantitative subsolution inequality
$$
\Delta_Fb:=F_{ij}\pd_{ij}b\ge \ep\,F_{ij}b_ib_j=: \ep|\nabla_Fb|^2,
$$
where $\ep=1/3$, and for the sigma-2 equation $F(D^2u)=\sigma_2(\lambda)=1$, we denote the linearized operator by the positive definite matrix
\eqal{
\label{linearized}
(F_{ij})=\Delta u\,I-D^2u=\sqrt{2\sigma_2+|D^2u|^2}\,I-D^2u>0.
}
In dimension three, the above Jacobi inequality holds for $C(3,K)=0$ unconditionally; see \cite[$\text{p. 3207}$]{SY3} and Remark \ref{rem:3D}.  In dimension four, we can establish an inequality for $b=\ln\Delta u$ without any Hessian conditions.  The cost is that $\ep$ depends on the Hessian, and $\ep(D^2u)\to 0$ is allowed.  We obtain an ``almost" Jacobi inequality.  

\begin{prop}
\label{prop:Jac}
Let $u$ be a smooth solution to $\sigma_2(\lambda)=1$, and $b=\ln\Delta u$.  In dimension $n=4$, we have
\eqal{
\label{aJac}
\Delta_Fb\ge\ep\,|\nabla_Fb|^2,
}
where
$$
\ep=\frac{2}{9}\left(\frac{1}{2}+\frac{\lambda_{min}}{\Delta u}\right)> 0.
$$
In higher dimension $n\ge 5$, \eqref{aJac} holds for
$$
\ep=
\frac{\sqrt{3n^2+1}-(n+1)}{3(n-1)}\left(\frac{\sqrt{3n^2+1}-(n-1)}{2n}+\frac{\lambda_{min}}{\Delta u}\right)
$$
under the condition
$$
\frac{\lambda_{min}}{\Delta u}\ge -\frac{\sqrt{3n^2+1}-(n-1)}{2n}
$$
Here, $\lambda_{min}$ is the minimum eigenvalue of $D^2u$.
\end{prop}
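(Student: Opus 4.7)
\smallskip
The plan is to derive the almost-Jacobi inequality by an explicit computation in a diagonal frame for $D^2 u$, reducing it to a sharp algebraic inequality in the third derivatives $u_{ijk}$ constrained by the linearized equation. At a fixed base point, choose orthonormal coordinates diagonalizing $D^2 u$ with eigenvalues $\lambda_i$, so $F^{ij} = \mu_i \delta^{ij}$ with $\mu_i = \Delta u - \lambda_i > 0$ in the positive branch. Differentiating $\sigma_2(D^2 u) = 1$ once yields the linear constraint $\sum_i \mu_i u_{iim} = 0$ for every $m$. Differentiating a second time, summing over the second differentiation index, and using the identity $\sigma_2^{ij,kl} B_{ij} C_{kl} = \tr(B)\tr(C) - \tr(BC)$ gives
\[
F^{ij}(\Delta u)_{ij} \;=\; \sum_{i,j,m} u_{ijm}^2 \,-\, |\nabla \Delta u|^2.
\]
Combined with $b_i = (\Delta u)_i/\Delta u$ and $b_{ij} = (\Delta u)_{ij}/\Delta u - b_i b_j$, the target $\Delta_F b \geq \ep |\nabla_F b|^2$ becomes the pointwise inequality
\[
\sum_{i,j,m} u_{ijm}^2 - |\nabla \Delta u|^2 \;\geq\; (1+\ep)\sum_i \mu_i (\Delta u)_i^2/\Delta u.
\]

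\smallskip
Next, I reduce this to a single-index inequality. Writing $c_{ij} := u_{iij}$ and splitting $\sum u_{ijm}^2$ by index multiplicity, the three-distinct-index terms $u_{ijk}$ occur only on the left and may be discarded. Using that each $c_{ij}^2$ with $i \neq j$ is counted three times, the remaining left side collapses to $\sum_m \bigl[3|w^{(m)}|^2 - 2(w^{(m)}_m)^2\bigr]$, where $w^{(m)} := (c_{1m},\ldots,c_{nm}) \in \mathbb{R}^n$. Meanwhile $(\Delta u)_m = \sum_i c_{im}$, so the right side equals $\sum_m \alpha_m (\mathbf{1}\cdot w^{(m)})^2$ with $\alpha_m = 2 + \ep - (1+\ep)\lambda_m/\Delta u$. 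As the sum decouples across $m$, with each $w^{(m)}$ independently constrained by $\sum_i \mu_i w^{(m)}_i = 0$, it suffices to prove that, for every $m$,
\[
3|w|^2 - 2 w_m^2 \;\geq\; \alpha_m (\mathbf{1}\cdot w)^2, \qquad w \in \mathbb{R}^n, \quad \textstyle\sum_i \mu_i w_i = 0. \qquad (\star)
\]

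\smallskip
Finally, I verify $(\star)$ by eliminating $w_m = -\sum_{i \neq m}(\mu_i/\mu_m) w_i$, which turns the inequality into the positive semidefiniteness of the quadratic form on $\mathbb{R}^{n-1}$ with matrix $M = 3I + BB^\top - \alpha_m AA^\top$, where $A, B \in \mathbb{R}^{n-1}$ have components $a_i = (\lambda_i - \lambda_m)/\mu_m$ and $b_i = \mu_i/\mu_m = 1 - a_i$. By Sherman--Morrison applied to $3I + BB^\top$, $M \succeq 0$ is equivalent to the scalar condition
\[
\alpha_m \bigl[\, |A|^2(3 + |B|^2) - (A \cdot B)^2\,\bigr] \;\leq\; 3(3 + |B|^2),
\]
which, after expanding $|A|^2, |B|^2, A\cdot B$ in terms of $\lambda_i/\Delta u$ and using $\sigma_2(\lambda) = 1$, reduces to a polynomial inequality in the normalized eigenvalues. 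In dimension $n = 4$ one checks this directly with the prescribed $\ep = \tfrac{2}{9}\bigl(\tfrac{1}{2} + \lambda_{\min}/\Delta u\bigr)$; the extreme configuration $(\lambda_1,\lambda_2,\lambda_3,\lambda_4) = (a,a,a,-a+O(1/a))$ achieves equality in the limit, which fixes both the linear form and the coefficient $2/9$. In $n \geq 5$, the dynamic semi-convexity condition is exactly the threshold beyond which the polynomial inequality ceases to hold.

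\smallskip
The main obstacle will be the final algebraic verification: the form $M$ pits the helpful $3I$ and lower-order term $BB^\top$ against the adversarial $-\alpha_m AA^\top$, and no single Cauchy--Schwarz bound delivers the sharp $\ep$. Rather, one must track the precise interplay between $|A|^2$, $|B|^2$, and $A\cdot B$ using $\sigma_2 = 1$, which both forces the linear-in-$\lambda_{\min}/\Delta u$ shape of $\ep$ and produces its degeneration to zero at the critical configuration where the equation turns conformally uniformly elliptic.
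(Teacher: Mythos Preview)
Your reduction through $(\star)$ is correct and matches the paper's Step~1 exactly: diagonalize, differentiate $\sigma_2=1$ twice, drop the three-distinct-index terms $u_{ijk}^2$, and decouple over $m$ to the constrained quadratic $3|w|^2-2w_m^2-\alpha_m(\mathbf 1\cdot w)^2\ge 0$ on $\{\sum_i\mu_i w_i=0\}$, with $\alpha_m=1+(1+\ep)\mu_m/\sigma_1$ (your $\alpha_m$ equals the paper's $\eta$).

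The linear-algebra step is where you diverge. The paper orthogonally projects $e_m$ and $\mathbf 1$ onto the constraint hyperplane to get vectors $E,L$, notes the form is $3|t|^2$ off $\mathrm{span}\{E,L\}$, and reduces to nonnegativity of the trace and determinant of a $2\times 2$ matrix with entries built from $|E|^2,|L|^2,E\cdot L$. Your elimination of $w_m$ plus Sherman--Morrison on $3I+BB^\top-\alpha_m AA^\top$ is an equivalent route and arguably cleaner: it yields a single scalar condition rather than two, and the paper's trace condition is in any case slack. Note that your $|A|^2,|B|^2,A\cdot B$ are, via $|\lambda|^2=\sigma_1^2-2$ and $\sum\lambda_i\mu_i=2$, functions of $\sigma_1$ and $\lambda_m$ alone---so your scalar inequality is really a condition in the single variable $y=\mu_m/\sigma_1$, not in all normalized eigenvalues as your wording suggests.

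The gap is the last paragraph. ``One checks this directly'' is exactly the content of the proposition, and you have not done it. In the paper the determinant condition, after dropping a positive term, becomes the positivity of an explicit downward quadratic $q_{1+\ep}(y)=(n-1)(4-3\delta)+[6-2n+4(n-1)\delta]y-2n\delta y^2$ on $0<y\le \mu_n/\sigma_1$. Writing $q_\delta=q_1+\ep\,r$, factoring $q_1(y)=2n(y_n^+-y)(y-y_n^-)$ with $y_n^\pm=(n+1\pm\sqrt{3n^2+1})/(2n)$, and bounding $r(y)>-3(n-1)$ on the admissible range yields the claimed linear-in-$\lambda_n/\sigma_1$ formula for $\ep$; the dynamic semi-convexity hypothesis is precisely $\mu_n/\sigma_1\le y_n^+$, automatic for $n=4$ by the sharp bound $\sigma_1>\tfrac{n}{n-2}|\lambda_n|$. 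Knowing the extremal configuration only tells you the answer; it does not establish the inequality away from it, nor does it give the required uniformity over all indices $m$ (the worst $m$ is $m=n$). Your Sherman--Morrison expression will lead to the same $q_\delta(y)$ after substitution---you should carry that out.
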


An important ingredient for Proposition \ref{prop:Jac} is the following sharp control on the minimum eigenvalue.
\begin{lem}
\label{lem:sharp}
Let $\lambda=(\lambda_1,\dots,\lambda_n)$ solve $\sigma_2(\lambda)=1$ with $\lambda_1+\cdots+\lambda_n>0$ and $\lambda_1\ge\lambda_2\ge\cdots\ge\lambda_n$.  Then the following bound holds for $n>2$ and is sharp:
\eqal{
\label{sharp}
\sigma_1(\lambda)>\frac{n}{n-2}|\lambda_{n}|.
}
\end{lem}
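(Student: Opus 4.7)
\medskip
\noindent\textbf{Proof proposal for Lemma \ref{lem:sharp}.}

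The plan is to isolate $\lambda_n$, apply Newton's (Maclaurin) inequality to the remaining $n-1$ variables, and observe that the resulting quadratic factors exactly along the desired bound. First, the case $\lambda_n\ge 0$ is trivial: all $\lambda_i\ge 0$, so $\sigma_1\ge n\lambda_n=n|\lambda_n|>\tfrac{n}{n-2}|\lambda_n|$. Hence I assume $\lambda_n<0$ and set $t=-\lambda_n>0$, $s=\sigma_1(\lambda)>0$, and $\lambda'=(\lambda_1,\dots,\lambda_{n-1})$, so that $\sigma_1(\lambda')=s+t$.

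The identity $\sigma_2(\lambda)=\sigma_2(\lambda')+\lambda_n\sigma_1(\lambda')=1$ yields
\[
\sigma_2(\lambda')=1+t(s+t)=1+ts+t^2.
\]
Next I apply Newton's inequality in $n-1$ variables,
\[
\sigma_2(\lambda')\le \frac{n-2}{2(n-1)}\sigma_1(\lambda')^2=\frac{n-2}{2(n-1)}(s+t)^2,
\]
to obtain $1+ts+t^2\le \frac{n-2}{2(n-1)}(s+t)^2$. Clearing denominators leads to the polynomial inequality
\[
(n-2)s^2-2st-nt^2\ \ge\ 2(n-1).
\]
The key observation is that the left side factors: its roots in $s$ are $s=-t$ and $s=\frac{n}{n-2}t$, so
\[
\bigl((n-2)s-nt\bigr)(s+t)\ \ge\ 2(n-1)\ >\ 0.
\]
Since $s+t>0$, the other factor must be strictly positive, giving $(n-2)s>nt$, which is exactly \eqref{sharp}.

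For sharpness, I would take the equality-in-Newton configuration $\lambda_1=\cdots=\lambda_{n-1}=a$ and solve $\sigma_2=\tfrac{(n-1)(n-2)}{2}a^2-(n-1)at=1$ for $t=\lambda_n$ with $a\to\infty$. Then $t=\tfrac{n-2}{2}a-\tfrac{1}{(n-1)a}$ and $\sigma_1=(n-1)a-t=\tfrac{n}{2}a+\tfrac{1}{(n-1)a}$, so $\sigma_1/|\lambda_n|\to \frac{n}{n-2}$ as $a\to\infty$. This is the extreme configuration already anticipated in the introduction. I do not expect a substantial obstacle here: the only mildly clever step is spotting that the quadratic from Newton's inequality factors precisely at the conjectured ratio, which then makes the strict inequality $2(n-1)>0$ do all the work.
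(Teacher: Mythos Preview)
Your proof is correct and follows essentially the same approach as the paper: both handle $\lambda_n\ge 0$ trivially, then for $\lambda_n<0$ isolate $\lambda'=(\lambda_1,\dots,\lambda_{n-1})$ and invoke Newton's inequality $\sigma_2(\lambda')\le\frac{n-2}{2(n-1)}\sigma_1(\lambda')^2$ (the paper phrases this via the traceless-part identity $\sigma_1(\lambda')^2=\frac{n-1}{n-2}(2\sigma_2(\lambda')+|\lambda'^\bot|^2)$, which is Newton with explicit defect). The only difference is cosmetic algebra: the paper computes the ratio $\sigma_1/(-\lambda_n)=-1+\sigma_1(\lambda')^2/(\sigma_2(\lambda')-1)$ directly, whereas you factor the resulting quadratic in $s,t$; your sharpness example is identical to the paper's configuration~\eqref{config}.
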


\begin{proof}
The sharpness follows from the configurations
\eqal{
\label{config}
\lambda=\left(a,a,\dots,a,-\frac{(n-2)}{2}a+\frac{1}{(n-1)a}\right).
}
Next, if $\lambda_n\ge 0$, we have
$$
\sigma_1=\lambda_1+\cdots+\lambda_n\ge n\lambda_n.
$$
For $\lambda_n<0$, we write $\lambda'=(\lambda_1,\dots,\lambda_{n-1})$ and observe that $\lambda_n=(1-\sigma_2(\lambda'))/\sigma_1(\lambda')$.  We must have $\sigma_2(\lambda')>1$, as $\sigma_1(\lambda')>0$ from (\ref{linearized}), so we write
$$
\frac{\sigma_1(\lambda)}{-\lambda_n}=-1+\frac{\sigma_1(\lambda')^2}{\sigma_2(\lambda')-1}>-1+\frac{\sigma_1(\lambda')^2}{\sigma_2(\lambda')}.
$$
We write $\sigma_1(\lambda')^2$ in terms of the traceless part $\lambda'^\bot$ of $\lambda'$ and $\sigma_2(\lambda')$:
$$
\sigma_1(\lambda')^2=\frac{n-1}{n-2}(2\sigma_2(\lambda')+|\lambda'^\bot|^2).
$$
It then follows
\begin{align*}
\frac{\sigma_1(\lambda)}{-\lambda_n}&>-1+\frac{2(n-1)}{n-2}= \frac{n}{n-2}.
\end{align*}
\end{proof}

As a consequence, we obtain the following quantitative ellipticity for equation (\ref{s2}).

\begin{cor}
\label{cor:ellipticity}
Let $\lambda=(\lambda_1,\dots,\lambda_n)$ solve $f(\lambda)=\sigma_2(\lambda)=1$, with $\lambda_1+\cdots+\lambda_n>0$.  For $\lambda_1\ge \lambda_2\ge\cdots\ge \lambda_n$ and $f_i=\pd f/\pd\lambda_i$, we have
\eqal{
\label{ellipticity}
\frac{1}{\sigma_1}&\le f_1\le \left(\frac{n-1}{n}\right)\sigma_1,\\
\left(1-\frac{1}{\sqrt 2}\right)\sigma_1&\le f_i\le 2\left(\frac{n-1}{n}\right)\sigma_1,\qquad i\ge 2.
}
\end{cor}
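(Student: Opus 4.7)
The plan is to start from the identity $f_i = \partial\sigma_2/\partial \lambda_i = \sigma_1(\lambda) - \lambda_i$, which one reads off directly from $\sigma_2(\lambda) = \sum_{j<k}\lambda_j\lambda_k$. Because $\lambda_1 \ge \lambda_2 \ge \cdots \ge \lambda_n$, the $f_i$'s are in reverse order: $f_1 \le f_2 \le \cdots \le f_n$. Each of the four bounds in \eqref{ellipticity} then reduces to an elementary estimate on either $\lambda_1$ or $\lambda_n$ (for the $i=1$ line), and on $\lambda_2$ or $\lambda_n$ (for the $i\ge 2$ line), using only the ordering, the identity $\sigma_1^2 = 2\sigma_2 + |\lambda|^2 = 2 + \sum_i \lambda_i^2$, and the sharp inequality \eqref{sharp} from Lemma~\ref{lem:sharp}.

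For the $i=1$ line, I would handle the upper bound first: the arithmetic-mean inequality $\lambda_1 \ge \sigma_1/n$ gives $f_1 = \sigma_1 - \lambda_1 \le \frac{n-1}{n}\sigma_1$. For the lower bound, I would bound $\lambda_1^2 \le \sum_i \lambda_i^2 = \sigma_1^2 - 2$, so $\lambda_1 \le \sqrt{\sigma_1^2 - 2}$, and then rationalize
\[
f_1 = \sigma_1 - \lambda_1 \;\ge\; \sigma_1 - \sqrt{\sigma_1^2 - 2} \;=\; \frac{2}{\sigma_1 + \sqrt{\sigma_1^2 - 2}} \;\ge\; \frac{1}{\sigma_1}.
\]

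For the $i\ge 2$ line, I would use $f_i \le f_n = \sigma_1 - \lambda_n$ for the upper bound: when $\lambda_n \ge 0$ this is already at most $\sigma_1 \le \frac{2(n-1)}{n}\sigma_1$, and when $\lambda_n<0$ Lemma~\ref{lem:sharp} gives $-\lambda_n < \frac{n-2}{n}\sigma_1$, so $f_i \le \sigma_1 + \frac{n-2}{n}\sigma_1 = \frac{2(n-1)}{n}\sigma_1$. For the lower bound, since $i\ge 2$ we have $f_i \ge f_2 = \sigma_1 - \lambda_2$. If $\lambda_2 \le 0$ then $f_i \ge \sigma_1$; otherwise $\sigma_1^2 = 2 + \sum_j \lambda_j^2 \ge 2 + \lambda_1^2 + \lambda_2^2 \ge 2\lambda_2^2$ using $\lambda_1 \ge \lambda_2 > 0$, so $\lambda_2 \le \sigma_1/\sqrt 2$ and hence $f_i \ge (1 - 1/\sqrt 2)\sigma_1$.

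No step here is a genuine obstacle: once the identity $f_i = \sigma_1 - \lambda_i$ is used, all four inequalities come from the ordering, the $\sigma_1^2 = 2 + |\lambda|^2$ identity, and the already-established sharp bound $\sigma_1 > \frac{n}{n-2}|\lambda_n|$. The only place that requires Lemma~\ref{lem:sharp} is the upper bound on $f_i$ for $i \ge 2$ in the regime $\lambda_n < 0$; the other three bounds follow from purely algebraic manipulations.
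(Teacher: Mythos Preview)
Your proof is correct and follows essentially the same approach as the paper: both use $f_i=\sigma_1-\lambda_i$, the identity $\sigma_1^2=2+|\lambda|^2$, the ordering of the $\lambda_i$'s, and Lemma~\ref{lem:sharp} for the $f_n$ upper bound. The only cosmetic difference is that for the $i\ge 2$ lower bound the paper bounds each $f_i$ individually by $(1-i^{-1/2})\sigma_1$ via $\lambda_i^2\le (\lambda_1^2+\cdots+\lambda_i^2)/i<\sigma_1^2/i$, whereas you first reduce to $f_2$ by monotonicity and then run the $i=2$ case; both yield the stated constant $1-1/\sqrt 2$.
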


\begin{proof}
The upper bound for $f_1=\sigma_1-\lambda_1$ comes from the easy bound $n\lambda_1\ge\sigma_1$.  The sharp upper bound for $f_n$ follows from \eqref{sharp}:
$$
f_i\le f_n=\sigma_1-\lambda_n<\left(1+\frac{n-2}{n}\right)\sigma_1.
$$
The $i=1$ lower bound goes as follows:
$$
f_1=\sigma_1-\lambda_1=\frac{2+|(0,\lambda_2,\dots,\lambda_n)|^2}{\sigma_1+\lambda_1}>\frac{2}{\sigma_1+\lambda_1}>\frac{1}{\sigma_1}.
$$
The $i\ge 2$ lower bounds for $f_i=\sigma_1-\lambda_i$ are true if $\lambda_i\le 0$.  For $\lambda_i>0$,
$$
f_i=\sigma_1-\lambda_i>\sigma_1-\sqrt{\frac{\lambda_1^2+\cdots+\lambda_i^2}{i}}>\left(1-i^{-1/2}\right)\sigma_1,
$$
where we used 
$$
\sigma_1=\sqrt{2+|\lambda|^2}>\sqrt{\lambda_1^2+\cdots+\lambda_i^2},
$$
in the last inequality.
\end{proof}
\begin{rem}
A sharp form of \eqref{ellipticity} for the $i\ge 2$ lower bounds and rougher upper bounds was first shown in [LT, (16)]. A rougher form of the lower bounds in \eqref{ellipticity}, enough for our proof of doubling Proposition \ref{prop:doub}, also follows from \cite[Lemma 3.1]{CW}, \cite[Lemma 2.1]{CY}, and \cite[(2.4)]{SY1}.

\end{rem}

\begin{proof}[Proof of Proposition \ref{prop:Jac}]
Step 1. Expression of the Jacobi inequality.  After a rotation at $x=p$, we assume that $D^2u(p)$ is diagonal.  Then $(F_{ij})=\text{diag}(f_i)$, where $f(\lambda)=\sigma_2(\lambda)$.  The following calculation was performed in \cite[$\text{p. 4}$]{SY3} for $b=\ln(\Delta u+J)$ for some constant $J$.  We repeat it below with $J=0$, for completeness.  We start with the following formulas at $x=p$:
\begin{align}
\label{gradb}
&|\nabla_Fb|^2=\sum_{i=1}^nf_i\frac{(\Delta u_i)^2}{(\Delta u)^2},\\
\label{Deltab}
    &\Delta_Fb=\sum_{i=1}^nf_i\left[\frac{\pd_{ii}\Delta u}{\Delta u}-\frac{(\pd_i\Delta u)^2}{(\Delta u)^2}\right]
\end{align}
Next, we replace the fourth order derivatives $\pd_{ii}\Delta u=\sum_{k=1}^n\pd_{ii}u_{kk}$ in \eqref{Deltab} by third derivatives.  By differentiating \eqref{s2}, we have 
\eqal{
\label{Ds2}
\Delta_FDu=(F_{ij}u_{ijk})_{k=1}^n=0.
}
Differentiating \eqref{Ds2} and using \eqref{linearized}, we obtain at $x=p$,
\begin{align*}
\sum_{i=1}^nf_i\pd_{ii}\Delta u&=\sum_{k=1}^n\Delta_Fu_{kk}=\sum_{i,j,k=1}^nF_{ij}\pd_{ij}u_{kk}=-\sum_{i,j,k=1}^n\pd_kF_{ij}\pd_{ij}u_k\\
&=\sum_{i,j,k=1}^n-(\Delta u_k\delta_{ij}-u_{kij})u_{kij}=\sum_{i,j,k=1}^nu_{ijk}^2-\sum_{k=1}^n(\Delta u_k)^2.
\end{align*}
Substituting this identity into \eqref{Deltab} and regrouping terms of the forms $u_{ \clubsuit\heartsuit\spadesuit}^2$, $u_{\clubsuit\clubsuit\heartsuit}^2,$ $u_{\heartsuit\heartsuit\heartsuit}^2$, and $(\Delta u_\clubsuit)^2$, we obtain
$$
\Delta_Fb=\frac{1}{\sigma_1}\left\{6\sum_{i<j<k}u_{ijk}^2+\left[3\sum_{i\neq j}u_{jji}^2+\sum_iu_{iii}^2-\sum_i\left((1+\frac{f_i}{\sigma_1}\right)
(\Delta u_i)^2\right]\right\}
$$
Accounting for \eqref{gradb}, we obtain the following quadratic:
$$
(\Delta_Fb-\ep|\nabla_Fb|^2)\sigma_1\ge 3\sum_{i\neq j}u_{jji}^2+\sum_{i}u_{iii}^2-\sum_i(1+\delta f_i/\sigma_1)(\Delta u_i)^2,
$$
where $\delta:=1+\ep$ here.  As in \cite{SY3}, we fix $i$ and denote $t=t_i=(u_{11i},\dots,u_{nni})$ and $e_i$ the $i$-th basis vector of $\re^n$.  Then we recall equation (2.9) from \cite{SY3} for the $i$-th term above:
$$
Q:=3|t|^2-2\langle e_i,t\rangle^2-(1+\delta f_i/\sigma_1)\langle(1,\dots,1),t\rangle^2.
$$
The objective is to show that $Q\ge 0$.  The idea in \cite{SY3} was to reduce the quadratic form to a two dimensional subspace.  In that paper, $Q\ge 0$ was shown under a semi-convexity assumption of the Hessian.  Here, we show how to remove this assumption in dimension four.  For completeness, we repeat that reduction below.

\smallskip
Step 2.  Anisotropic projection.  Equation \eqref{Ds2} at $x=p$ shows that $\langle Df,t_i\rangle=0$, so $Q$ is zero along a subspace.  We can thus replace the vectors $e_i$ and $(1,\dots,1)$ in $Q$ with their projections:
$$
Q=3|t|^2-2\langle E,t\rangle^2-(1+\delta f_i/\sigma_1)\langle L,t\rangle ^2,
$$
where
$$
E=e_i-\frac{\langle e_i,Df\rangle}{|Df|^2} Df,\qquad L=(1,\dots,1)-\frac{\langle (1,\dots,1),Df\rangle}{|Df|^2} Df.
$$
Their rotational invariants can be calculated as in [SY3, equation (2.10)]:
\eqal{
\label{invariants}
|E|^2=1-\frac{f_i^2}{|Df|^2},\qquad |L|^2=1-\frac{2(n-1)}{|Df|^2},\qquad E\cdot L=1-\frac{(n-1)\sigma_1 f_i}{|Df|^2}.
}
The quadratic is mostly isotropic: if $t$ is orthogonal to both $E$ and $L$, then $Q=3|t|^2\ge 0$, so it suffices to assume that $t$ lies in the $\{E,L\}$ subspace.  The matrix associated to the quadratic form is
$$
Q=3I-2E\otimes E-\eta L\otimes L,
$$
where $\eta=1+\delta f_i/\sigma_1=1+(1+\ep)f_i/\sigma_1$.  Since $Q$ is a quadratic form, its matrix is symmetric and has real eigenvalues.  In the non-orthogonal basis $\{E,L\}$, the eigenvector equation is
$$
\begin{pmatrix}
3-2|E|^2&-2E\cdot L\\
-\eta L\cdot E&3-\eta|L|^2
\end{pmatrix}
\begin{pmatrix}
\alpha\\
\beta
\end{pmatrix}
=
\xi
\begin{pmatrix}
\alpha\\
\beta
\end{pmatrix}.
$$
The real eigenvalues of this matrix have the explicit form
$$
\xi=\frac{1}{2}\left(tr\pm\sqrt{tr^2-4det}\right),
$$
where the trace and determinant are given by
$$
tr=6-2|E|^2-\eta|L|^2,\qquad det=9-6|E|^2-3\eta|L|^2+2\eta\left[|E|^2|L|^2-(E\cdot L)^2\right].
$$
It thus suffices to show that $tr\ge 0$ and $\det\ge 0$.  

\medskip
Step 3. Non-negativity of the trace of the quadratic form.  In [SY3], the trace was shown positive; indeed, by \eqref{invariants},
\begin{align*}
tr&=6-2\left(1-\frac{f_i^2}{|Df|^2}\right)-\left(1+\delta\frac{f_i}{\sigma_1}\right)\left(1-\frac{2(n-1)}{|Df|^2}\right)\\
&>3-\delta\frac{f_i}{\sigma_1}=\frac{(3-\delta)\sigma_1+\delta\lambda_i}{\sigma_1}\\
&\ge 3-\delta\left(1+\frac{n-2}{n}\right)\ge 0,
\end{align*}
for any 
\eqal{
\label{trace}
\delta\le \frac{3n}{2(n-1)},
}
using the bound \eqref{sharp} in the case that $\lambda_i<0$.  

\medskip
Step 4. Non-negativity of the determinant of the quadratic form.  Our new contribution here is to analyze the determinant in general.  Again by \eqref{invariants}, the determinant is
\begin{align*}
det&=\frac{6f_i^2}{|Df|^2}-\frac{3\delta f_i}{\sigma_1}+3\left(1+\frac{\delta f_i}{\sigma_1}\right)\boxed{\frac{2(n-1)}{|Df|^2}}\\
&+2\left(1+\frac{\delta f_i}{\sigma_1}\right)\left[\frac{2(n-1)\sigma_1 f_i}{|Df|^2}-\frac{n f_i^2}{|Df|^2}-\boxed{\frac{2(n-1)}{|Df|^2}}\,\,\right]\\
&>-\frac{3\delta f_i}{\sigma_1}+4\left(1+\frac{\delta f_i}{\sigma_1}\right)\frac{(n-1)\sigma_1 f_i}{|Df|^2}+\left[6-2n\left(1+\frac{\delta f_i}{\sigma_1}\right)\right]\frac{f_i^2}{|Df|^2}.
\end{align*}
Since $f_i=\sigma_1-\lambda_i$ and $\sigma_1^2=2+|\lambda|^2$, we get $|Df|^2=(n-1)\sigma_1^2-2$, so we obtain an inequality in terms of $y:=f_i/\sigma_1$:
\eqal{
\label{det}
    det\cdot\frac{|Df|^2}{\sigma_1f_i}&>\boxed{\frac{6\delta}{\sigma_1^2}}-3(n-1)\delta+4(n-1)\left(1+\delta \frac{f_i}{\sigma_1}\right)+\left[6-2n\left(1+\delta\frac{f_i}{\sigma_1}\right)\right]\frac{f_i}{\sigma_1}\\
    &>
    \left(n-1\right)\left(4-3\delta\right)+\Big[6-2n+4\left(n-1\right)\delta\Big]  y-2n\delta y^{2}\\
    &=:q_\delta(y).
}

\begin{rem} 
\label{rem:3D}
In three dimensions, the almost Jacobi inequality \eqref{aJac} becomes a
full strength one $\bigtriangleup_{F}b\geq\frac{1}{3}\left\vert \nabla
_{F}b\right\vert ^{2},\ $because in \eqref{det}, $q_{4/3}\left(  y\right)
=\frac{8}{3}\frac{f_{i}}{\sigma_{1}}\left(  1+3\lambda_{i}/\sigma_{1}\right)
>0\ $ by \eqref{sharp}.  This was observed in \cite[$\text{p. 3207}$]{SY3}.
\end{rem}

\medskip
Recalling $\delta=1+\ep$, we write $q_\delta(y)=q_1(y)+\ep\,r(y)$.  The remainder:
$$
r(y)=-3(n-1)+4(n-1)y-2ny^2=-3(n-1)+2ny\left(\frac{2(n-1)}{n}-y\right)>-3(n-1),
$$
where we used $0<y=f_i/\sigma_1 \le f_n/\sigma_1 < 2(n-1)/n$; see \eqref{ellipticity} in Corollary \ref{cor:ellipticity}.  To estimate $q_1(y)$, let us solve $0=q_1(y)=n-1+2(n+1)y-2ny^2$:
\eqal{
y_n^\pm:=\frac{n+1\pm\sqrt{1+3n^2}}{2n},\qquad y^+_n\stackrel{n=4}{=}\,\frac{3}{2}.
}
Then $q_1(y)/(y_n^+-y)=2n(y-y_n^-)$.  This linear function is minimized at the endpoint $y=0$, so if $y_n^+-y\ge 0$, we conclude
$$
q_\delta(y)\ge -2ny_n^-(y_n^+-y)-3(n-1)\ep\ge -2ny_n^-\left(y_n^+-\frac{f_n}{\sigma_1}\right)-3(n-1)\ep=0,
$$
provided
\eqal{
\ep&:= -\frac{2ny_n^-}{3(n-1)}\left(y_n^+-\frac{f_n}{\sigma_1}\right)\\
&=\frac{\sqrt{3n^2+1}-(n+1)}{3(n-1)}\left(\frac{\sqrt{3n^2+1}-(n-1)}{2n}+\frac{\lambda_n}{\sigma_1}\right)\\
&\stackrel{n=4}{=}\frac{2}{9}\left(\frac{1}{2}+\frac{\lambda_n}{\sigma_1}\right).
}
The condition $y_n^+-y=y_n^+-\frac{f_i}{\sigma_1}\ge 0$ for all $i$ is equivalent to dynamic semi-convexity,
$$
\frac{\lambda_n}{\sigma_1}\ge -\frac{\sqrt{3n^2+1}-(n-1)}{2n}.
$$
If $n=4$, all solutions satisfy this unconditionally, using \eqref{sharp}.

\medskip
Let us now check that the trace condition \eqref{trace} is also satisfied.  It suffices to have $\ep<1/2$.  Writing $\ep=c(n)(c_n+\lambda_n/\sigma_1)$, it can be shown that $c(n)$ is an increasing function of $n$ bounded by $ (\sqrt{3}-1)/3<1/4$, and $c_n$ is a decreasing function bounded by $(\sqrt{13}-1)/4<2/3$.  Combined with $\lambda_n/\sigma_1\le 1/n\le 1/2$ (see Lemma \ref{lem:sharp}), we find that $\ep<7/24$ for $n\ge 2$.

\medskip
This completes the proof of Proposition \ref{prop:Jac} in dimension $n=4$ and higher dimension $n\ge 5$.
\end{proof}


\section{The doubling inequality}

We now use the almost-Jacobi inequality in Proposition \ref{prop:Jac} to show an a priori doubling inequality for the Hessian.  

\begin{prop}
\label{prop:doub}
Let $u$ be a smooth solution of sigma-2 equation \eqref{s2} on $B_4(0)\subset \re^n$.  If $n=4$, then the following inequality is valid:
$$
\sup_{B_{2}(0)}\Delta u\le C(n)\exp\left(C(n)\|u\|_{C^1(B_3(0))}^2\right)\sup_{B_{1}(0)}\Delta u.
$$
If $n\ge 5$, the inequality is true, if we suppose also that on $B_3(0)$, there is a semi-convexity type condition
\eqal{
\label{lower}
\frac{\lambda_{min}(D^2u)}{\Delta u}\ge -c_n,\qquad c_n:=\frac{\sqrt{3n^2+1}-n+1}{2n}.
}
\end{prop}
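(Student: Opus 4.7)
The plan is to carry out a Qiu-type maximum principle argument on an auxiliary function of the form
$$
w(x)=\phi(x)^m\,b(x)+A\bigl(|Du(x)|^2+u(x)^2\bigr),\qquad b=\ln\Delta u,
$$
where $\phi\in C^2(\overline{B_3(0)})$ is a radial cutoff with $\phi\equiv 1$ on $\overline{B_2(0)}$, $\phi>0$ in $B_3(0)$, and $\phi\equiv 0$ on $\partial B_3(0)$, and $m\in\mathbb{N}$, $A>0$ are constants to be chosen in terms of $n$ and $\|u\|_{C^1(B_3(0))}$. The goal is to bound $\max_{\overline{B_3(0)}} w$ in terms of $\sup_{\overline{B_1(0)}} b$ plus an additive constant of the form $CA(1+\|u\|_{C^1}^2)$; evaluating the resulting inequality at an arbitrary point of $B_2(0)$ where $\phi\equiv 1$ and exponentiating then delivers the claimed doubling inequality, with the prefactor $\exp(C\|u\|_{C^1}^2)$ arising from the $A(|Du|^2+u^2)$ summand.

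At an interior maximum point $p\in B_3(0)\setminus\overline{B_1(0)}$, the first order condition $\nabla w(p)=0$ allows $\phi^m\nabla b(p)$ to be expressed in terms of $-b\,\nabla\phi^m$ and $-A\nabla(|Du|^2+u^2)$. Feeding this into the second order condition $\Delta_F w(p)\le 0$, together with the almost-Jacobi inequality $\Delta_F b\ge \ep\,|\nabla_F b|^2$ from Proposition \ref{prop:Jac}, the identity $\Delta_F(|Du|^2)=2\sum_i f_i\lambda_i^2$ that follows from $\Delta_F Du=0$ as in \eqref{Ds2}, and $\Delta_F u=f_i\lambda_i=2\sigma_2=2$, yields a pointwise inequality of the schematic shape
$$
0\;\ge\;\phi^m\ep\,|\nabla_F b|^2\;+\;2A\sum_{i=1}^n f_i\lambda_i^2\;-\;C\,b\,|\Delta_F\phi^m|\;-\;C\,\frac{b^2\,|\nabla_F\phi^m|^2}{\phi^m}\;+\;\text{l.o.t.}
$$
after absorbing cross terms by Young's inequality.

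The main obstacle is the degeneration $\ep\to 0$, which by Proposition \ref{prop:Jac} occurs precisely at the extreme configurations $\lambda_{\min}/\sigma_1\to -1/2$ in $n=4$ (or at the boundary of \eqref{lower} for $n\ge 5$): neither $\ep\,|\nabla_F b|^2$ nor $A|Du|^2$ alone uniformly absorbs the bad cutoff terms, which grow linearly in $\sigma_1=\tr F/(n-1)$. The resolution, flagged in the introduction, is complementarity: at these extreme configurations, $\lambda_{\min}^2\gtrsim \sigma_1^2$, and the ellipticity lower bound $f_i\ge (1-1/\sqrt 2)\sigma_1$ from Corollary \ref{cor:ellipticity} gives
$$
\sum_i f_i\lambda_i^2\;\ge\;\bigl(1-1/\sqrt 2\bigr)\sigma_1\,\lambda_{\min}^2\;\gtrsim\;\sigma_1^3,
$$
so the cubic growth of $A\Delta_F(|Du|^2)$ dominates the linear-in-$\sigma_1$ bad terms once $A\gtrsim 1+\|u\|_{C^1(B_3(0))}^2$. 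In the complementary regime where $\ep\ge \ep_0>0$ is bounded below, standard Young absorption into $\phi^m\ep|\nabla_F b|^2$ closes the estimate. Running the dichotomy $\ep\ge \ep_0$ versus $\ep<\ep_0$ separately and combining gives the pointwise bound $\phi^m(p)b(p)\le \sup_{B_1(0)}b+C(1+\|u\|_{C^1(B_3(0))}^2)$, and exponentiation produces the stated doubling inequality. For $n\ge 5$ the argument is identical once \eqref{lower} is imposed, which is exactly what guarantees $\ep\ge 0$ throughout $B_3(0)$ via Proposition \ref{prop:Jac}.
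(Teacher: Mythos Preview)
Your overall strategy mirrors the paper's: run a Qiu-type maximum principle, split into the degenerate case $\ep\to 0$ (where $\sum_i f_i\lambda_i^2\gtrsim\sigma_1^3$ takes over) versus the non-degenerate case $\ep\ge\ep_0$. The degenerate branch can indeed be closed as you describe. But your test function $w=\phi^m b+A(|Du|^2+u^2)$ is missing an ingredient that the paper's test function
\[
P=2\ln\rho+\alpha(x\cdot Du-u)+\tfrac{\beta}{2}|Du|^2+\ln\max(\bar b,\gamma^{-1})
\]
carries, namely the $\alpha(x\cdot Du-u)$ summand, and this creates a genuine gap in your non-degenerate case.

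Here is the failure mode. Suppose $\ep\ge\ep_0$, the maximum point $p$ lies in the annulus where $\nabla\phi\neq 0$, $p$ is aligned with the $e_1$-eigendirection (so $\phi_i=0$ for $i\ge 2$), the configuration is of the type $\lambda_1\approx\sigma_1$ with $|\lambda_2|,\dots,|\lambda_n|=O(1)$, and $u_1(p)=0$. Your first-order condition then reads $\phi^m b_1=-mb\phi^{m-1}\phi_1$, with no $\lambda_1$ appearing. Since $f_1\approx 1/\sigma_1$ in this configuration, one gets $\phi^m\ep_0|\nabla_F b|^2\approx \ep_0 m^2 b^2\phi^{m-2}/\sigma_1$, whereas the bad cutoff term is $|b\,\Delta_F\phi^m|\approx m^2 b\,\sigma_1\phi^{m-2}$. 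Absorption would require $b\gtrsim\sigma_1^2$, impossible for $b=\ln\sigma_1$. The other good term $2A\sum f_i\lambda_i^2=2A(\sigma_1-3\sigma_3)\approx 2A\sigma_1$ is also only linear in $\sigma_1$ here, so it cannot beat $b\sigma_1\phi^{m-2}$ without $A\gtrsim b$, which is circular. ``Standard Young absorption'' does not close this case.

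The paper's $\alpha(x\cdot Du-u)$ term contributes $\alpha x_i\lambda_i$ to the first-order condition, giving $|\bar b_1/\bar b|\gtrsim\alpha|x_1|\lambda_1$ regardless of $Du$. Then $f_1(\bar b_1/\bar b)^2\gtrsim f_1\alpha^2\lambda_1^2\ge \alpha^2\sigma_1/n^2$, and the Jacobi term $\bar b\cdot f_1(\bar b_1/\bar b)^2$ dominates $C\sigma_1/\rho^2$, yielding $\rho^2\bar b\le C/\alpha^2$. This is precisely the paper's Subcase $x_1^2>1/n$. The complementary Subcase $x_k^2>1/n$ for $k\ge 2$ uses that $f_k\gtrsim\sigma_1$, so the cutoff term itself already scales correctly. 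The paper also uses $\ln\bar b$ (with $\bar b=b-\max_{B_1}b$ and a floor $\gamma^{-1}$) rather than $\phi^m b$; this packaging makes the Jacobi gain appear as $(\ep\bar b-1)|\nabla_F\bar b|^2/\bar b^2$, which is positive once $\bar b>\gamma^{-1}$ is large, and it localizes the inner ball comparison cleanly. If you add an $\alpha(x\cdot Du-u)$ term to your $w$ and redo the case analysis along the paper's lines, the argument can be completed.
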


\begin{proof}
The following test function on $B_3(0)$ is taken from \cite[Theorem 4]{GQ} and \cite[Lemma 4]{Q}:
\eqal{
\label{Pdef}
P_{\alpha\beta\gamma}:=2\ln\rho(x)+\alpha (x\cdot Du-u)+\beta|Du|^2/2+\ln \max(\bar b,\gamma^{-1}).
}
Here, $\rho(x)=3^2-|x|^2$, and $\bar b=b-\max_{B_1(0)}b$ for $b=\ln\Delta u$.  We also define $\Gamma:=4+\|u\|_{L^\infty(B_3(0))}+\|Du\|_{L^\infty(B_3(0))}$ to gauge the lower order terms, and denote by $C=C(n)$ a dimensional constant which changes line by line and will be fixed in the end.  Small dimensional positive $\gamma$, and smaller positive constants $\alpha, \beta$ depending on $\gamma$ and $\Gamma$, will be chosen later.  We also assume summation over repeated indices for simplicity of notation, where it is impossible in Section \ref{sec:Jac}.
%
%
%

\smallskip
Suppose the maximum of $P_{\alpha\beta\gamma}$ occurs at $x^*\in B_3(0)$.  If $\bar b(x^*)\le \gamma^{-1}$, then we conclude that for $C$ large enough,
\eqal{
\label{Pmax1}
\max_{B_2(0)}P_{\alpha\beta\gamma}\le C+3\alpha\Gamma+\frac{1}{2}\beta\Gamma^2+\ln\gamma^{-1}.
}
So we suppose that $\bar b(x^*)>\gamma^{-1}$.  If $|x^*|\le 1$, then again we obtain \eqref{Pmax1}, so we also assume that $1<|x^*|<3$.

\smallskip
After a rotation about $x=0$, we assume that $D^2u(x^*)$ is diagonal, $u_{ii}=\lambda_i$, with $\lambda_1\ge \lambda_2\ge\cdots\ge \lambda_n$.  At the maximum point $x^*$, we have $DP_{\alpha\beta\gamma}=0$,
\eqal{
\label{max1}
-\frac{\bar b_i}{\bar b}&=2\frac{\rho_i}{\rho}+\alpha x_ku_{ik}+\beta u_ku_{ik}\\
&=2\frac{\rho_i}{\rho}+\alpha x_i\lambda_i+\beta u_i\lambda_i,
}
and for $0\ge D^2P_{\al\beta\gamma}= (\pd_{ij}P_{\al\beta\gamma})$, we get
\eqal{
0\ge\Big(&-\frac{4\delta_{ij}}{\rho}-2\frac{\rho_i\rho_j}{\rho^2}+\alpha ( x_ku_{ijk}+ u_{ij})+\beta(u_ku_{ijk}+u_{ik}u_{jk})+\frac{\bar b_{ij}}{\bar b}-\frac{\bar b_i\bar b_j}{\bar b^2}\Big)
}
Contracting with $F_{ij}=\pd \sigma_2/\pd u_{ij}$ and using
$$
F_{ij}u_{ijk}=0,\qquad F_{ij}u_{ij}=2\sigma_2=2,\qquad F_{ij}\delta_{ij}=(n-1)\sigma_1,
$$
as well as diagonality at $x^*$, $(F_{ij})=(f_{i}\delta_{ij})$ for $f(\lambda)=\sigma_2(\lambda)$, we obtain at maximum point $x^*$,
$$
0\ge F_{ij}\pd_{ij}P_{\alpha\beta\gamma}> -4(n-1)\frac{\sigma_1}{\rho}-2\frac{f_i\rho_i^2}{\rho^2}+\beta f_i\lambda_i^2+\frac{f_i\bar b_{ii}}{\bar b}-\frac{f_i\bar b_{i}^2}{\bar b^2}.
$$
Under the assumption that $n=3,4$, or instead that $n\ge 5$ with Hessian constraint \eqref{lower}, almost-Jacobi inequality Proposition \ref{prop:Jac} is valid, and we get
\eqal{
\label{maxP}
0\ge-C\frac{\sigma_1}{\rho}-C\frac{f_i\rho_i^2}{\rho^2}+\beta f_i\lambda_i^2+\left(c_n+\frac{\lambda_n}{\Delta u}\right)\frac{f_i\bar b_{i}^2}{\bar b}-C\frac{f_i\bar b_{i}^2}{\bar b^2}.
}
Indeed, recall that $\varepsilon=c(n)(c_n+\lambda_{n}/\sigma_1)$ in Proposition \ref{prop:Jac}.  If we divide the inequality by $c(n)$ and enlarge $C$, we obtain \eqref{maxP}.  

If the nonnegative coefficient of $f_i\bar b_i^2/\bar b$ is positive, we can proceed as in Qiu's proof. 
In the alternative case, we must use the $\beta$ term.  
We start with the latter case.  Note that from \eqref{sharp} in Lemma 1, condition \eqref{lower} $\lambda_n /\Delta u > -1/2 = - c_n$ is automatically satisfied for  $n=4$, and $\lambda_n /\Delta u > -1/3  > -c_n/2$ for  $n=3$.

\medskip
\textbf{CASE} $-c_n\le \lambda_n/\Delta u\le -c_n/2$:  It follows from \eqref{ellipticity} that $f_n\lambda_n^2\ge c(n)\sigma_1^3$. For larger $C$,
$$
0\ge -C\frac{\sigma_1}{\rho^2}+\beta \sigma_1^3-Cf_i\frac{\bar b_i^2}{\bar b^2}.
$$
Using \eqref{max1}, ellipticity \eqref{ellipticity}, and the estimate $|\lambda|^2=\sigma_1^2-2\sigma_2<\sigma_1^2$, we obtain
$$
\beta \sigma_1^3\le C\frac{\sigma_1}{\rho^2}+C(\alpha^2+\beta^2\Gamma^2)\sigma_1^3.
$$
If the small parameters satisfy
\eqal{
\label{small1}
\alpha^2\le \beta/(3C),\qquad \beta\le 1/(3C\Gamma^2),
}
we obtain $\rho^2\sigma_1^2\le C/\beta$.  Since $\sigma_1=\sqrt{2+|\lambda|^2}>\sqrt 2$, we have $\sigma_1^2>2\ln\sigma_1$, and we conclude from \eqref{Pdef} and \eqref{small1} that
\eqal{
\label{Pmax2}
P_{\alpha\beta\gamma}\le C+\ln\beta^{-1}.
}

\medskip
We next show that Qiu's argument goes through, in the case that ``almost" Jacobi becomes a regular Jacobi.

\medskip
\textbf{CASE $\lambda_n/\Delta u\ge -c_n/2$.}  It follows that, after enlarging $C$, \eqref{maxP} can be reduced to
$$
0\ge -C\frac{\sigma_1}{\rho}-C\frac{f_i\rho_i^2}{\rho^2}+\beta f_i\lambda_i^2+(\bar b-C)f_{i}\frac{\bar b_i^2}{\bar b^2}.
$$
Using $\bar b(x^*)\ge \frac{1}{2}\bar b(x^*)+\frac{1}{2}\gamma^{-1}$, we assume that $\gamma$ satisfies
\eqal{
\label{small1a}
\frac{1}{2}\gamma^{-1}\ge C,
}
so after enlarging $C$ again, we can further reduce it to
\eqal{
\label{maxP1}
0\ge -C\frac{\sigma_1}{\rho}-C\frac{f_i\rho_i^2}{\rho^2}+\beta f_i\lambda_i^2+\bar bf_{i}\frac{\bar b_i^2}{\bar b^2}.
}

\smallskip
\textbf{SUBCASE} $1<|x^*|<3$ and $x_1^2 > 1/n$: 
If the small parameters satisfy the condition
\eqal{
\label{small2}
\beta\le \alpha/(2n\Gamma),\qquad 
}
we then obtain from \eqref{max1},
$$
\frac{\bar b_1^2}{\bar b^2}\ge \frac{1}{2}(\alpha/n-\beta\Gamma)^2\lambda_1^2-\frac{C}{\rho^2}\ge\frac{1}{8n^2}\alpha^2\lambda_1^2-\frac{C}{\rho^2}.
$$
We assume that this gives a lower bound, or that $C/\rho^2\le \alpha^2\lambda_1^2/(16n^2)$:
\eqal{
\label{B1}
\frac{\bar b_1^2}{\bar b^2}\ge \frac{1}{16n^2}\alpha^2\lambda_1^2.
}
For if not, we get $\rho^2\lambda_1^2\le C/\alpha^2$.  Since $\lambda_{1}\ge\sigma_1/n$, we can get $\rho^2\ln\sigma_1\le C/\alpha^2$.  Using \eqref{Pdef} and \eqref{small1}, we would obtain
\eqal{
\label{Pmax3}
P_{\alpha\beta\gamma}\le C+2\ln\alpha^{-1}.
}
It follows then, from \eqref{B1} and \eqref{ellipticity}, that \eqref{maxP1} can be simplified to
$$
0\ge -C\frac{\sigma_1}{\rho^2}+\bar b\,f_1(\alpha^2 \lambda_1^2).
$$
From \eqref{ellipticity}, there holds $f_1\lambda_1^2\ge \sigma_1/n^2$, so we conclude $\rho^2\bar b\le C/\alpha^2$.  By \eqref{Pdef} and \eqref{small1}, we conclude a similar bound \eqref{Pmax3}:
$$
P_{\alpha\beta\gamma}\le C+2\ln\alpha^{-1}.
$$

\smallskip
\textbf{SUBCASE} $1<|x^*|<3$ and $x_k^2>1/n$ for some $k\ge 2$: Let us first note that $\sigma_1/\rho\le C f_k\rho_k^2/\rho^2$, by \eqref{ellipticity}.  We apply $\bar b>\gamma^{-1}$ to \eqref{maxP1}:
$$
0\ge -C\frac{f_i\rho_i^2}{\rho^2}+\beta f_i\lambda_i^2+\gamma^{-1}f_{i}\frac{\bar b_i^2}{\bar b^2}.
$$
Using the $DP=0$ equation, \eqref{max1} and enlarging $C$, we obtain
\eqal{
\label{maxP2}
0&\ge -C\frac{f_i\rho_i^2}{\rho^2}+\beta f_i\lambda_i^2+\gamma^{-1}f_i\frac{\rho_i^2}{\rho^2}-C\gamma^{-1}\alpha^2f_ix_i^2\lambda_i^2-C\gamma^{-1}\Gamma^2\beta^2 f_i\lambda_i^2\\
&\ge \frac{f_i\rho_i^2}{\rho^2}(\gamma^{-1}-C)+\Gamma^{-2}f_i\lambda_i^2\Big((\Gamma^2\beta)-C\gamma^{-1}(\Gamma\alpha)^2-C\gamma^{-1}(\Gamma^2\beta)^2\Big).
}
The first term is handled if $\gamma^{-1}$ is large enough:
$$
\gamma^{-1}\ge 2C.
$$
We choose $\alpha,\beta$ as follows:
\eqal{
\label{small3}
\alpha=\gamma^4/\Gamma,\qquad \beta=\gamma^{6}/\Gamma^2.
}
Let us check that the previous $\alpha,\beta$ conditions \eqref{small1} and \eqref{small2} are satisfied for any $\gamma^{-1}\ge 2C$, if $C$ is large enough:
$$
\frac{\alpha^2}{\beta}=\gamma^{2}\le \frac{1}{4C^2}<\frac{1}{3C},\qquad \frac{\Gamma\beta}{\alpha}=\gamma^2\le \frac{1}{4C^2}<\frac{1}{2n}.
$$
Finally, the coefficient of $\Gamma^{-2}f_i\lambda_i^2$ in \eqref{maxP2} is
$$
\gamma^6-C\gamma^7-C\gamma^{11}=\gamma^6(1-C\gamma-C\gamma^5)\ge \gamma^6\left(1-\frac{1}{2}-\frac{\gamma^4}{2}\right)>0.
$$
Overall, we obtain a contradiction to \eqref{maxP2}.  

\medskip
We conclude that for all large $\gamma^{-1}\ge 2C$ and $\alpha,\beta$ satisfying \eqref{small3}, the maximum of $P_{\alpha\beta\gamma}$ obeys the largest of the $P$ bounds \eqref{Pmax1}, \eqref{Pmax2}, and \eqref{Pmax3}:
$$
\max_{B_2(0)}P_{\alpha\beta\gamma}\le C+\ln\max(\gamma^{-1},\beta^{-1},\alpha^{-2})=C+\ln(\Gamma^2\gamma^{-8}).
$$
We now choose large $\gamma^{-1}=2C=C(n)$.  By \eqref{Pdef}, we obtain the doubling estimate
$$
\frac{\max_{B_2(0)}\sigma_1}{\max_{B_1(0)}\sigma_1}\le \exp\exp\left(C+\ln\Gamma^2\right)=\exp (C\Gamma^2).
$$
\end{proof}

We now modify the doubling inequality to account for ``moving centers".  We may control the global maximum by the maximum on any small ball.
\begin{cor}
\label{cor:doub}
Let $u$ be a smooth solution of sigma-2 equation \eqref{s2} on $B_4(0)\subset\re^n$.  If $n=4$, or if lower bound \eqref{lower} holds for $n\ge 5$, then the following inequality is true for any $y\in B_{1/3}(0)$ and $0<r<4/3$:
\eqal{
\label{doubley}
\sup_{B_{2}(0)}\Delta u\le C(n,r,\|u\|_{C^1(B_3(0))})\sup_{B_{r}(y)}\Delta u.
}
\end{cor}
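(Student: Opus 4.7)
The plan is to reduce to the doubling inequality of Proposition \ref{prop:doub} by translation and rescaling, then iterate down to the arbitrary small scale $r$. For $y\in B_{1/3}(0)$ and $0<s\le 2/3$, I define the rescaled solution $v(x)=s^{-2}u(y+sx)$. Since $D^2v(x)=D^2u(y+sx)$, the function $v$ still solves $\sigma_2=1$ and the eigenvalue ratios $\lambda_{\min}/\Delta u$ are preserved pointwise. The inclusions $B_{4s}(y)\subset B_4(0)$ and $B_{3s}(y)\subset B_3(0)$, both valid for $s\le 2/3$ and $|y|\le 1/3$, ensure that $v$ satisfies all the hypotheses of Proposition \ref{prop:doub} on $B_4(0)$, including the dynamic semi-convexity \eqref{lower} when $n\ge 5$. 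Applying that proposition to $v$ and translating back gives, for every such $s$,
\[
\sup_{B_{2s}(y)}\Delta u \;\le\; C(n)\exp\!\bigl(C(n)\,s^{-4}\|u\|_{C^1(B_3(0))}^2\bigr)\,\sup_{B_s(y)}\Delta u,
\]
using $\|v\|_{C^1(B_3(0))}\le Cs^{-2}\|u\|_{C^1(B_3(0))}$.

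Next I would iterate this translated doubling at the geometric sequence of scales $t_j:=2^{-j}(2/3)$, stopping at the smallest index $k$ with $t_k\le r$; since $r<4/3$, such a $k$ exists with $k\lesssim \log_2(1/r)$. Chaining the inequalities,
\[
\sup_{B_{4/3}(y)}\Delta u \;\le\; \prod_{j=1}^{k} C(t_j)\,\sup_{B_{t_k}(y)}\Delta u \;\le\; C(n,r,\|u\|_{C^1(B_3(0))})\,\sup_{B_r(y)}\Delta u.
\]
The total constant is controlled by $C(n)^k\exp\bigl(C\sum_j t_j^{-4}\|u\|_{C^1}^2\bigr)$, and the geometric sum $\sum_j t_j^{-4}$ is dominated by its smallest-scale term $\sim r^{-4}$, so the bound depends only on $n$, $r$, and $\|u\|_{C^1(B_3(0))}$.

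Finally, since $|y|\le 1/3$ gives $B_1(0)\subset B_{4/3}(y)$, Proposition \ref{prop:doub} at the origin furnishes $\sup_{B_2(0)}\Delta u\le C\sup_{B_1(0)}\Delta u\le C\sup_{B_{4/3}(y)}\Delta u$, and combining with the iteration above yields \eqref{doubley}. The argument is essentially bookkeeping once the translated-rescaled form of Proposition \ref{prop:doub} is recorded; the only point needing care is the geometric verification that every ball appearing in the iteration stays inside $B_4(0)$---and inside $B_3(0)$ for the semi-convexity hypothesis when $n\ge 5$---which is exactly what forces the specific choices $|y|\le 1/3$ and $s\le 2/3$.
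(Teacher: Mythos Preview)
Your argument is correct and follows essentially the same strategy as the paper: translate and rescale Proposition \ref{prop:doub} to a ball centered at $y$, then iterate the resulting inequality along a geometric sequence of radii until reaching scale $r$, and finally use $B_1(0)\subset B_{4/3}(y)$ together with one more application of Proposition \ref{prop:doub} at the origin to pass from $B_{4/3}(y)$ to $B_2(0)$. The only cosmetic differences are that the paper first extracts a center-$y$ inequality $\sup_{B_{5/3}(y)}\Delta u\le C\sup_{B_{4/3}(y)}\Delta u$ via the inclusions $B_1(0)\subset B_{4/3}(y)\subset B_{5/3}(y)\subset B_2(0)$ and then iterates with contraction ratio $4/5$, whereas you rescale directly with ratio $1/2$; your indexing of the product $\prod_{j=1}^k$ should start at $j=0$ to account for the first step from $B_{4/3}(y)$ down to $B_{t_0}(y)$, but this is harmless.
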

\begin{proof}
We first note that
$$
B_1(0)\subset B_{4/3}(y)\subset B_{5/3}(y)\subset B_2(0),
$$
for any $|y|<1/3$.  By Proposition \ref{prop:doub}, we find an inequality independent of the center:
\eqal{
\label{doublex}
\sup_{B_{5/3}(y)}\Delta u\le C(n)\exp\Big(C(n)\|u\|^2_{C^1(B_3(0))}\Big)\sup_{B_{4/3}(y)}\Delta u.
}
We iterate this inequality about $y$ using the rescalings
$$
u_{k+1}(\bar x)=\left(\frac{5}{4}\right)^2u_k\left(\frac{4}{5}(\bar x-y)+y\right),\qquad u_0=u,\qquad k=0,1,2,\dots
$$ 
It follows that each $u_k$ satisfies \eqref{doublex}.  Denoting 

$$
C_k=C(n)\exp \left[C(n)\|u_k\|^2_{C^1(B_3(0))}\right]\le C(n)\exp\left[ \left(\frac{5}{4}\right)^{2k}C(n)\|u\|_{C^1(B_3(0))}\right],
$$
we obtain for $k=1,2,\dots$,
$$
\sup_{B_{5/3}(y)}\Delta u\le C_0C_1\cdots C_{k}\sup_{B_{r_{k+1}}(y)}\Delta u\le C(k,n,\|u\|_{C^1(B_3(0))})\sup_{B_{r_{k+1}}(y)}\Delta u,\qquad r_k=\frac{5}{3}\left(\frac{4}{5}\right)^{k}
$$
Letting $r_{k+1}\le r< r_k$ for some $k$, we combine this inequality with Proposition \ref{prop:doub} again, to arrive at \eqref{doubley}.
\end{proof}

\begin{rem}
In the uniformly elliptic case, or $a^{ij}b_{ij}\ge a^{ij}b_ib_j$ for $\lambda I\le (a^{ij})\le \Lambda I$, it follows from Trudinger \cite[$\text{p. 70}$]{T3} that a local Alexandrov maximum principle argument gives an integral doubling inequality:
$$
\sup_{B_1(0)}b\le C\left(n,r,\frac{\Lambda}{\lambda}\right)\left(1+\|b\|_{L^n(B_r(0)}\right).
$$
In the $\sigma_2$ case, we can find an integral doubling inequality by modifying Qiu's argument, but the non-uniform ellipticity adds a nonlinear weight to the integral:
$$
\sup_{B_1(0)}\ln\Delta u\le C(n,r)\Gamma^2\left(1+\|(\Delta u)^{2/n}\ln\Delta u\|_{L^n(B_{r}(0)}\right).
$$
This \textit{nonlinear} doubling inequality can be employed to reach Theorems \ref{thm:s2} and \ref{thm:n5}, 
as in Section \ref{sec:proof}, Step 3.
\end{rem}

\section{Alexandrov regularity for viscosity solutions}

We modify the approach of Evans-Gariepy \cite{EG} and Chaudhuri-Trudinger \cite{CT} to show the following Alexandrov regularity.  In \cite[Theorem 1, section 6.4]{EG}, the Alexandrov theorem is seen to arise from combining a gradient estimate with a ``$W^{2,1}$ estimate" for convex functions.  The latter can be heuristically understood from the a priori divergence structure calculation
$$
\int_{B_1(0)}|D^2u|\,dx \leq \int_{B_1(0)} \Delta u\,dx\le C(n)\|u\|_{L^\infty(B_2(0))}.
$$  
However, for $k$-convex functions, there is no gradient estimate, in general, and only H\"older and $W^{1,n+}$ estimates for $k>n/2$.  We are not able to use Chaudhuri and Trudinger's result in dimension $n=4$.  Yet, 2-convex solutions of $\sigma_2=1$ have an even stronger interior Lipschitz estimate, by Trudinger \cite{T2}, and also Chou-Wang \cite{CW}, with a similar ``$W^{2,1}$ estimate" from $\Delta u=\sqrt{2+|D^2u|^2}$, so the method of \cite{EG} and \cite{CT} can be applied verbatim.  We record the modifications below, for completeness.

\begin{prop}
\label{prop:Alex}
Let $u$ be a viscosity solution of sigma-2 equation \eqref{s2} on $B_4(0)$ with $\Delta u>0$.  Then $u$ is twice differentiable almost everywhere in $B_4(0)$, or for almost every $x\in B_4(0)$, there is a quadratic polynomial $Q$ such that
$$
\sup_{y\in B_{r}(x)}|u(y)-Q(y)|=o(r^2).
$$
\end{prop}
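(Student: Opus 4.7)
The plan is to follow the Evans--Gariepy framework \cite{EG} as extended to non-convex functions by Chaudhuri--Trudinger \cite{CT}, replacing their H\"older input (available only for $k>n/2$) by the stronger interior Lipschitz estimate of Trudinger \cite{T2} and Chou--Wang \cite{CW} for 2-convex solutions of $\sigma_2=1$.  The two ingredients needed to run the Alexandrov-type argument are a local Lipschitz bound on $u$ and a local $W^{2,1}$ bound on smooth approximations of $u$; both are available in the positive branch $\Delta u>0$.

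On any compactly contained ball $B_R\subset B_4(0)$, I would approximate $u|_{\partial B_R}$ by smooth boundary data and invoke the Dirichlet solvability of \cite{CNS,T1} to produce smooth classical solutions $u_k$ of $\sigma_2(D^2u_k)=1$ with $\Delta u_k>0$, which converge locally uniformly to $u$ by viscosity-solution comparison.  The Trudinger--Chou--Wang gradient estimate then gives $\|Du_k\|_{L^\infty(B_{R'})}\le C(\|u\|_{L^\infty(B_4(0))})$ for $R'<R$.  In the positive branch, $(\Delta u_k)^2=2\sigma_2+|D^2u_k|^2=2+|D^2u_k|^2$ yields the pointwise bound $|D^2u_k|\le\Delta u_k$, so integration by parts produces the uniform $W^{2,1}$ estimate
$$
\int_{B_{R'}}|D^2u_k|\,dx\le \int_{B_{R'}}\Delta u_k\,dx\le C\|Du_k\|_{L^\infty(\partial B_{R'})}.
$$
Passing to a subsequence, $u_k\to u$ in $W^{1,p}_{loc}$ for every $p<\infty$, while $D^2u_k\rightharpoonup \mu$ and $\Delta u_k\,dx\rightharpoonup \nu$ weakly as Radon measures on $B_{R'}$, with $\mu=D^2u$ distributionally, $\nu\ge 0$, and $|\mu|\le \nu$.

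I would then run the quadratic-approximation argument verbatim.  Lebesgue--Besicovitch decomposition writes $\mu=\widetilde{D^2u}\,dx+\mu_s$ with $|\mu_s|\le \nu_s$.  At almost every $x_0$, the point $x_0$ is a Lebesgue point of $Du$ and of $\widetilde{D^2u}$, the maximal function of $|\mu|$ is finite, and $\nu_s$ has zero density at $x_0$.  Setting
$$
Q(y):=u(x_0)+Du(x_0)\cdot(y-x_0)+\frac{1}{2}(y-x_0)^T\widetilde{D^2u}(x_0)(y-x_0),
$$
one applies the fundamental theorem of calculus to $Du_k$ along segments from $x_0$, controls the remainder by integrals of $|D^2u_k-\widetilde{D^2u}(x_0)|$ over small balls using the above properties, and passes $k\to\infty$.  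This is precisely the Evans--Gariepy--Chaudhuri--Trudinger scheme and yields $\sup_{y\in B_r(x_0)}|u(y)-Q(y)|=o(r^2)$, which is twice differentiability at $x_0$.

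The anticipated obstacle is handling the singular part $\mu_s$ of the signed matrix-valued measure $\mu$: the convex argument of \cite{EG} uses pointwise convexity to squeeze $u$ between quadratic barriers, and that tool is not available here.  The resolution is already built in: the positive branch $\Delta u>0$ and the identity $|D^2u_k|\le \Delta u_k$ give the measure-level domination $|\mu|\le \nu$ by a non-negative scalar measure, so the scalar Lebesgue--Besicovitch density statement for $\nu_s$ absorbs $\mu_s$ and the remainder of the argument proceeds verbatim, as asserted in the paragraph preceding the statement.
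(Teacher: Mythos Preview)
Your proposal is correct and follows essentially the same route as the paper: replace the Chaudhuri--Trudinger H\"older input by the interior Lipschitz estimate of \cite{T2,CW}, use $|D^2u|\le\Delta u$ in the positive branch to get the local $W^{2,1}$/Radon-measure Hessian, and run the Evans--Gariepy scheme. The only point you leave implicit that the paper spells out is the two-stage structure of the final step---first an integral bound $\dashint_{B_r}|u-Q|=o(r^2)$, then an upgrade to the $\sup$ bound via the weighted Lipschitz estimate applied to $u(\cdot)-u(x_0)-Du(x_0)\cdot(\cdot-x_0)$ (which again solves $\sigma_2=1$); your line-integral description conflates these, but invoking ``the EG/CT scheme'' covers it.
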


We begin the proof of this proposition by first recalling the weighted norm Lipschitz estimate \cite[Corollary 3.4, $\text{p. 587}$]{TW} for smooth solutions of $\sigma_2=1,\Delta u>0$ on a smooth, strongly convex 
domain $\Omega\subset\re^n$:
\eqal{
\label{gradp}
\sup_{x,y\in\Omega:x\neq y}d_{x,y}^{n+1}\frac{|u(x)-u(y)|}{|x-y|}\le C(n)\int_\Om |u|dx,
}
where $d_{x,y}=\min(d_x,d_y)$, and $d_x=\text{dist}(x,\pd\Omega)$.  By solving the Dirichlet problem \cite{CNS} with smooth approximating boundary data, this pointwise estimate holds for viscosity solution $u$, if $\Omega\subset\subset B_4(0)$, i.e. $u$ is locally Lipschitz.  By Rademacher's theorem, $u$ is differentiable almost everywhere, with $Du\in L^\infty_{loc}$ equal the weak (distribution) gradient.  By Lebesgue differentiation, for almost every $x\in B_4(0)$,
\eqal{
\label{gradi}
&\lim_{r\to 0}\dashint_{B_r(x)}|Du(y)-Du(x)|dy=0.
}
For second order derivatives, we next recall the definition \cite[$\text{p. 306}$]{CT} that a continuous 2-convex function satisfies both $\Delta u>0$ and $\sigma_2>0$ in the viscosity sense.  Since viscosity solution $u$ to $\sigma_2=1$ and $\Delta u>0$ is 2-convex, we deduce from \cite[Theorem 2.4]{CT} that the weak Hessian $\pd^2u$, interpreted as a vector-valued distribution, gives a vector-valued Radon measure $[D^2u]=[\mu^{ij}]$: 
$$
\int u\,\vp_{ij}\, dx=\int \vp \,d\mu^{ij},\qquad\vp\in C^\infty_0(B_4(0)).
$$

Let us outline another proof. Noting that $\sum_{j\neq i}D_{jj}u\ge \bigtriangleup u-\lambda_{\max}\geq0$ for 2-convex smooth function $u,$ where the last inequality follows from \eqref{linearized} with $2\sigma_{2}\geq0,$ via smooth approximation in $C^{0}/L^{\infty}$ norm, we see that $\mu^{i}$ and also $\mu^{e}$ for any unit vector on $\mathbb{R}^{n}$ in \cite[(2.7)]{CT}, are non-negative Borel measures, in turn, bounded on compact sets, that is,  Radon measures for 2-convex continuous $u.$ Readily $\mu^{I_{n}}$ for 2-convex continuous $u$ in \cite[(2.6)]{CT} is also a Radon measure. Consequently, for 2-convex continuous $u,$ $D_{ii}u=\mu^{I_{n}}-\mu^{i}$ and also $D_{ee}u=\mu^{I_{n}}-\mu^{e}$ in \cite[(2.8)]{CT} are Radon
measures. This leads to another way in showing that the Hessian measures $D_{ij}u=\mu^{ij}=\left(\mu^{e_{+}e_{+}}-\mu^{e_{-}e_{-}}\right)  /2$ with $e_{+}=\left(  \partial_{i}+\partial_{j}\right)  /\sqrt{2}$ and $e_{-}=\left(\partial_{i}-\partial_{j}\right)  /\sqrt{2}$  in \cite[(2.9)]{CT}, are Radon measures for all $1\leq i,j\leq n$ and 2-convex continuous $u.$

\smallskip
By Lebesgue decomposition, we write $[D^2u]=D^2u\,dx+[D^2u]_s$, where $D^2u\in L^1_{loc}$ denotes the absolutely continuous part with respect to $dx$, and $[D^2u]_s$ is the singular part.  In particular, for $dx$-almost every $x$ in $B_4(0)$,
\begin{align}
\label{Hess1}
&\lim_{r\to 0}\dashint_{B_r(x)}|D^2u(y)-D^2u(x)|dy=0,\\
\label{Hess2}
&\lim_{r\to 0}\frac{1}{r^n}\|[D^2u]_s\|(B_r(x))=0.
\end{align}
Here, we denote by $\|[D^2u]_s\|$ the total variation measure of $[D^2u]_s$.  In fact, these conditions plus \eqref{gradi} are precisely conditions (a)-(c) in \cite[Theorem 1, section 6.4]{EG}.  We state their conclusion as a lemma, and include their proof of this fact in the Appendix.

\begin{lem}
\label{lem:o(r^2)}
Let $u\in C(B_4(0))$ have a weak gradient $Du\in L^1_{loc}$ which satisfies \eqref{gradi} for a.e. $x$, and a weak Hessian $\pd ^2u$ which induces a Radon measure $[D^2u]=D^2u\,dx+[D^2u]_s$ obeying conditions \eqref{Hess1} and \eqref{Hess2} for a.e. $x$.  Then for a.e. $x\in B_4(0)$, it follows that
\eqal{
\dashint_{B_r(x)}\Big |u(y)-u(x)-(y-x)\cdot Du(x)-\frac{1}{2}(y-x)D^2u(x)(y-x)\Big|dy=o(r^2).
}
\end{lem}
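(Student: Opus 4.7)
The plan is to follow the classical Evans--Gariepy/Chaudhuri--Trudinger approach via iterated Poincar\'e inequalities for $BV$ functions. Fix a point $x\in B_4(0)$ at which hypotheses \eqref{gradi}, \eqref{Hess1}, and \eqref{Hess2} simultaneously hold, and define the second-order remainder
$$
v(y):=u(y)-u(x)-(y-x)\cdot Du(x)-\tfrac12(y-x)D^2u(x)(y-x).
$$
In the weak sense one has $Dv(y)=Du(y)-Du(x)-D^2u(x)(y-x)$, and the Hessian measure of $v$ decomposes as
$$
[D^2v]=\bigl(D^2u(y)-D^2u(x)\bigr)\,dy+[D^2u]_s,
$$
so by \eqref{Hess1}--\eqref{Hess2} its total variation satisfies $\|D^2v\|(B_r(x))/|B_r|=o(1)$ as $r\to 0^+$.

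The first step applies the Poincar\'e inequality for $BV$ vector fields to $Dv$,
$$
\dashint_{B_r(x)}|Dv-(Dv)_{B_r}|\,dy\le C(n)\,r\,\frac{\|D^2v\|(B_r(x))}{|B_r|}=o(r).
$$
To upgrade this to a bound on the mean $(Dv)_{B_r}$ itself, I iterate on dyadic scales: from
$$
|(Dv)_{B_{s/2}}-(Dv)_{B_s}|\le 2^n\dashint_{B_s(x)}|Dv-(Dv)_{B_s}|\,dy=o(s),
$$
and a telescoping sum along $s,s/2,s/4,\dots$, using $(Dv)_{B_s}\to Dv(x)=0$ as $s\to 0^+$ (which follows from hypothesis \eqref{gradi} since $D^2u(x)(y-x)\to 0$ pointwise), one deduces $|(Dv)_{B_r}|=o(r)$. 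Combining the two estimates yields $\dashint_{B_r(x)}|Dv|\,dy=o(r)$.

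In the second step, applying the standard $W^{1,1}$-Poincar\'e inequality to $v$ gives
$$
\dashint_{B_r(x)}|v-v_{B_r}|\,dy\le C(n)\,r\dashint_{B_r(x)}|Dv|\,dy=o(r^2).
$$
A second dyadic telescoping argument on the scalar averages $v_{B_s}$, using $v_{B_s}\to v(x)=0$ from the continuity of $u$, upgrades this into $|v_{B_r}|=o(r^2)$. The triangle inequality
$$
\dashint_{B_r(x)}|v|\,dy\le\dashint_{B_r(x)}|v-v_{B_r}|\,dy+|v_{B_r}|
$$
then delivers the claim.

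The main technical hurdle is justifying the BV-Poincar\'e inequality for $Dv$ when $[D^2u]_s\ne 0$: one must work with $Dv$ as an element of $BV_{\mathrm{loc}}$ rather than $W^{1,1}_{\mathrm{loc}}$, absorbing the singular component into the total variation $\|D^2v\|(B_r)$. Once this is in place, the iteration scheme is routine, since the summability of the geometric series $\sum 2^{-k}$ converts the pointwise ``$o$''-conditions at $x$ provided by \eqref{gradi}--\eqref{Hess2} into uniform ``$o$''-decay of the ball averages of $v$ and $Dv$.
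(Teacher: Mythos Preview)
Your argument is correct, but it is \emph{not} the route the paper takes. The paper (following Evans--Gariepy verbatim) mollifies $u$ to $u^\ep$, writes the exact integral Taylor remainder
\[
u^\ep(y)-Q^\ep(y)=\int_0^1(1-t)\,y\cdot[D^2u^\ep(ty)-D^2u(0)]\cdot y\,dt,
\]
integrates against a test function, and passes $\ep\to 0$ by dominated convergence, using \eqref{Hess1}--\eqref{Hess2} to control the limit. Your approach instead runs two Poincar\'e inequalities (BV for $Dv$, then $W^{1,1}$ for $v$) coupled with a dyadic telescoping to convert the oscillation control into control of the averages themselves. This is a perfectly valid alternative and arguably more modular; the paper's approach is more self-contained in that it never invokes Poincar\'e, but it pays for this by tracking the mollification error by hand.

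One small remark: the ``main technical hurdle'' you flag is not really a hurdle. By hypothesis $Du\in L^1_{loc}$ and its distributional derivative is the Radon measure $[D^2u]$, so $Du\in BV_{loc}$ \emph{by definition}, and the BV--Poincar\'e inequality on balls is standard (with the total variation $\|D^2v\|(B_r)$ on the right, exactly as you wrote). The singular part $[D^2u]_s$ causes no extra difficulty here; it is automatically absorbed. Also, your attribution of this Poincar\'e-telescoping scheme to Evans--Gariepy/Chaudhuri--Trudinger is slightly off: both of those references actually use the mollification/Taylor-remainder argument that the paper reproduces.
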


Choose $x$ for which conditions \eqref{gradi}, \eqref{Hess1}, and \eqref{Hess2} are valid.  Let $h(y)=u(y)-u(x)-(y-x)\cdot Du(0)-(y-x)\cdot D^2u(0)\cdot (y-x)/2$.  Using
\eqal{
\label{L^1}
\dashint_{B_r(x)}|h(y)|dy=o(r^2),
}
we will upgrade this to the desired $\|h\|_{L^\infty(B_{r/2}(x))}=o(r^2)$.  The crucial ingredient is a pointwise estimate: for $0<2r<4-|x|$,
\eqal{
\label{point}
\sup_{y,z\in B_r(x),y\neq z}\frac{|h(y)-h(z)|}{|y-z|}\le \frac{C(n)}{r}\dashint_{B_{2r}(x)}|h(y)|dy+Cr,
}
where $C=C(n)|D^2u(x)|$.  This was shown as \cite[Lemma 3.1]{CT} for $k$-convex functions with $k>n/2$ using the H\"older estimate \cite[Theorem 2.7]{TW}, and \cite[Claim \#1, $\text{p. 244}$]{EG} for convex functions using a gradient estimate, respectively.  

\begin{proof}[Proof of \eqref{point}]
To establish \eqref{point}, we first let $g(y)=u(y)-u(x)-(y-x)\cdot Du(x)$; then $\sigma_2(D^2g(y))=1$ with $\Delta g(y)>0$, so gradient estimate \eqref{gradp} yields
\begin{align}
\label{pointLem}
\nonumber
    r^{n+1}\sup_{y,z\in B_r(x),y\neq z}&\frac{|g(y)-g(z)|}{|y-z|}\\
\nonumber
    &=\text{dist}(\boxed{\pd B_r(x)},\pd B_{2r}(x))^{n+1}\sup_{y,z\in \boxed{B_r(x)},y\neq z}\frac{|g(y)-g(z)|}{|y-z|}\\
\nonumber
    &\le \sup_{y,z\in B_{2r}(x),y\neq z}d_{y,z}^{n+1}\frac{|g(y)-g(z)|}{|y-z|}\\
\nonumber
    &\stackrel{\eqref{gradp}}{\le}C(n)\int_{B_{2r}(x)}|g(y)|dy\\
    &\le C(n)\int_{B_{2r}(x)}|h(y)|dy+C(n)|D^2u(x)|\,r^{n+2},
\end{align}
where $d_{y,z}:=\min(2r-|y-x|,2r-|z-x|)$.  Next, we polarize
$$
(y-x)\cdot D^2u(x)\cdot(y-x)-(z-x)\cdot D^2u(x)\cdot(z-x)=(y-x+z-x)\cdot D^2u(x)\cdot(y-z),
$$
which gives
$$
r^{n+1}\sup_{y,z\in B_r(x),y\neq z}\frac{|h(y)-h(z)|}{|y-z|}\le r^{n+1}\sup_{y,z\in B_r(x),y\neq z}\frac{|g(y)-g(z)|}{|y-z|}+C(n)r^{n+2}|D^2u(x)|.
$$
This inequality and \eqref{pointLem} lead to \eqref{point}.  
\end{proof}

The rest of the proof follows \cite[Claim \#2, $\text{p. 244}$]{EG} or \cite[Proof of Theorem 1.1, $\text{p. 311}$]{CT} verbatim.  We summarize the conclusion as a lemma and include its proof in the appendix.

\begin{lem}
\label{lem:pto(r^2)}
Let $h(y)\in C(B_4(0))$ and $x\in B_4(0)$ satisfy integral \eqref{L^1} and pointwise \eqref{point} bounds for $0<2r<4-|x|$.  Then $\sup_{B_{r/2}(x)}|h(y)|=o(r^2)$.  
\end{lem}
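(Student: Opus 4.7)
The plan is to follow the Evans--Gariepy / Chaudhuri--Trudinger argument that promotes the $L^1$ smallness in \eqref{L^1} to $L^\infty$ smallness via a max--ball dichotomy driven by the Lipschitz-type bound \eqref{point}. Abbreviate $M_r := \sup_{B_{r/2}(x)} |h|$ and $I(\rho) := \dashint_{B_\rho(x)} |h|\,dy$, so that the hypothesis reads $I(\rho) = o(\rho^2)$ and the target is $M_r = o(r^2)$. Since $h$ is continuous and $\overline{B_{r/2}(x)}\subset B_4(0)$, pick $y_0 \in \overline{B_{r/2}(x)}$ realizing the supremum, $|h(y_0)| = M_r$. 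The pointwise estimate \eqref{point} asserts that $h$ is Lipschitz on $B_r(x)$ with constant at most $A_r := (C(n)/r)\,I(2r) + C r$, where $C$ absorbs $|D^2u(x)|$. Consequently, on $B_r(x)$,
\[
|h(y)| \;\ge\; M_r - A_r|y-y_0|,\qquad\text{so}\qquad |h|\;\ge\;\tfrac{M_r}{2}\ \text{on}\ B_{M_r/(2A_r)}(y_0)\cap B_r(x).
\]

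Next I would split into two regimes according to whether the positivity radius $M_r/(2A_r)$ is at least $r/2$ or not. In the \textbf{large} regime $M_r \ge A_r r$, the containment $B_{r/2}(y_0)\subset B_r(x)$ (from $|y_0-x|\le r/2$) yields $|h|\ge M_r/2$ throughout $B_{r/2}(y_0)$, so averaging over $B_r(x)$ immediately gives
\[
I(r)\;\ge\;\frac{M_r}{2}\cdot\frac{|B_{r/2}|}{|B_r|}\;=\;\frac{M_r}{2^{n+1}},\qquad\text{hence}\qquad M_r\le 2^{n+1}I(r)=o(r^2).
\]
In the \textbf{small} regime $M_r < A_r r$, the smaller ball $B_{M_r/(2A_r)}(y_0)$ is still contained in $B_r(x)$ by the same center--distance estimate, and integrating $|h|\ge M_r/2$ on it gives
\[
I(r)\;\ge\;c_n\,\frac{M_r}{2}\left(\frac{M_r}{2A_r r}\right)^n,\qquad\text{i.e.,}\qquad M_r^{n+1}\;\le\;C_n\,(A_r r)^n\,I(r).
\]
Since $A_r r = C\,I(2r)+Cr^2 = O(r^2)$ by $I(2r)=o(r^2)$, and $I(r)=o(r^2)$, we obtain $M_r^{n+1}=o(r^{2n+2})$, hence $M_r=o(r^2)$, as desired.

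The argument is essentially bookkeeping: the key inputs are already packaged into \eqref{L^1} and \eqref{point}, and the proof amounts to choosing the right ball on which $|h|$ is comparable to $M_r$ and then integrating. The only mild subtlety is the geometric containment of the positivity ball $B_{M_r/(2A_r)}(y_0)$ (respectively $B_{r/2}(y_0)$) inside $B_r(x)$, which in both regimes follows from $y_0\in\overline{B_{r/2}(x)}$. I do not anticipate any deeper obstacle; the proof is a direct adaptation of \cite[Claim \#2, p.~244]{EG} or \cite[p.~311]{CT} to the present setting.
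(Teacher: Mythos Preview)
Your argument is correct. It differs from the paper's route, though both are standard and equally short. The paper proceeds by a Chebyshev-plus-transfer argument: given $\ep>0$, the set $\{z\in B_r(x):|h(z)|\ge\ep r^2\}$ has measure at most $\ep^{-1}r^{-2}\int_{B_r(x)}|h|=o(r^n)$, so for $r$ small (depending on $\ep$) every $y\in B_{r/2}(x)$ has a nearby ``good'' point $z$ with $|h(z)|\le\ep r^2$ and $|y-z|\lesssim\ep r$; then \eqref{point} transfers this to $|h(y)|\le C\ep r^2$. You instead work from the maximum point $y_0$ outward: the Lipschitz bound forces $|h|\ge M_r/2$ on a ball about $y_0$, and integrating this lower bound against \eqref{L^1} caps $M_r$. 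Your version trades the $\ep$-bookkeeping for a two-case split on the size of the positivity radius; the paper's version is case-free but requires the ``for every $\ep$ there is $r_0(\ep)$'' formulation. Both buy the same $o(r^2)$ conclusion with essentially the same inputs.
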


This completes the proof of Proposition \ref{prop:Alex}.

\begin{rem} In fact, Proposition \ref{prop:Alex} holds true for (continuous)
viscosity solutions to $\sigma_{k}\left(  D^{2}u\right)  =1$ for $2\leq k\leq
n/2$ in $n$ dimensions, because the needed conditions \eqref{gradp}-\eqref{Hess2} in the proof
are all available. The twice differentiability a.e. for all $k$-convex functions
and $k>n/2,$ without satisfying any equation in $n$ dimensions,  is the content
of the theorems by Alexandrov \cite[$\text{p. 242}$]{EG} and Chaudhuri-Trudinger \cite{CT}.
\end{rem}

\section{Proof of Theorems \ref{thm:s2} and \ref{thm:n5}}
\label{sec:proof}

Step 1. After scaling $4^{2}u(x/4)$, we claim that the Hessian $D^2u(0)$ is controlled by $\|u\|_{C^1(B_4(0))}$.  Otherwise, there exists a sequence of smooth solutions $u_k$ of \eqref{s2} on $B_4(0)$ with bound $\|u_k\|_{C^1(B_{3}(0))}\le A$, but $|D^2u_k(0)|\to\infty$, in either dimension $n=4$, or in higher dimension $n\ge 5$ with dynamic semi-convexity \eqref{lower}.  By Arzela-Ascoli, a subsequence, still denoted by $u_k$, uniformly converges on $B_3(0)$.  By the closedness of viscosity solutions (cf.\cite{CC}), the subsequence $u_k$ converges uniformly to a continuous viscosity solution, abusing notation, still denoted by $u$, of \eqref{s2} on $B_3(0)$; we included the non-uniformly elliptic convergence proof in the appendix, Lemma \ref{lem:conv}.  By Alexandrov Proposition \ref{prop:Alex}, we deduce that $u$ is second order differentiable almost everywhere on $B_3(0)$.  We fix such a point $x=y$ inside $B_{1/3}(0)$, and let $Q(x)$ be such that $u-Q=o(|x-y|^2)$.

\medskip
Step 2.  We apply Savin's small perturbation theorem \cite{S} to $v_k=u_k-Q$.  Given small $0<r<4/3$, we rescale near $y$:
$$
\bar v_k(\bar x)=\frac{1}{r^2}v_k(r\bar x+y).
$$
Then
\begin{align*}
\|\bar v_k\|_{L^\infty(B_1(0))}&\le \frac{\|u_k(r\bar x+y)-u(r \bar x+y) \|_{L^\infty(B_1(0))}} {r^2} + \frac{\|u(r\bar x+y)-Q(r \bar x+y) \|_{L^\infty(B_1(0))}}{r^2} \\
 &\le \frac{\|u_k(r\bar x+y)-u(r \bar x+y) \|_{L^\infty(B_1(0))}} {r^2} + \sigma(r)
\end{align*}
for some modulus $\sigma(r)=o(r^2)/r^2$. And also $\bar v_k$ solves the elliptic PDE in $B_1(0)$
$$
G(D^2\bar w)=\Delta \bar w+\Delta Q-\sqrt{2+|D^2\bar w+D^2Q|^2}=0.
$$
Note that $\sigma_2(D^2Q)=1$ with $\Delta Q>0$, so $G(0)=0$ with $G(M)$ smooth.  Moreover, $|D^2G|\le C(n)$, and $G(M)$ is uniformly elliptic for $|M|\le 1$, with elliptic constants depending on $n,Q$.  

Now we fix  $r=r(n,Q,\sigma)=:\rho$ small enough such that $\sigma(\rho)<c_1/2,$ where $c_1$ is the small constant in \cite[Theorem 1.3]{S}. As $u_k$ uniformly converges to  $v$, we have $\|\bar v_k\|_{L^\infty(B_1(0))} \le c_1$ for all large enough  $k$.
It follows from \cite[Theorem 1.3]{S} that
$$
\|u_k-Q\|_{C^{2,\alpha}(B_{\rho/2}(y))}\le C(n,Q,\sigma),
$$
with $\alpha=\alpha(n,Q,\sigma)\in(0,1)$. This implies $\Delta u_k\le C(n,Q,\sigma)$ on $B_{\rho/2}(y)$, uniform in $k$.


\medskip
Step 3.   Finally we apply doubling inequality \eqref{doubley} in Corollary \ref{cor:doub} to $u_k$ with $r=\rho/2$:
$$
\sup_{B_{2}(0)}\Delta u_k\le C(n,\rho/2,\|u_k\|_{C^1(B_3(0))})C(n,Q,\sigma)\le C(n,Q,\sigma, A).
$$
We deduce a contradiction to the ``otherwise blowup assumption" at $x=0$.

\begin{rem}
In fact, a similar proof directly establishes interior regularity for viscosity solution $u$ of \eqref{s2} in four dimensions, and then the Hessian estimate, instead of first obtaining the Hessian estimate, then the interior regularity as indicated in the introduction.  By rescaling $\bar u(\bar x)=u(r\bar x+x_0)/r^2$ at various centers, it suffices to show smoothness in $B_1(0)$, if $u\in C(B_5(0))$.  By Alexandrov Proposition \ref{prop:Alex}, we let $x=y$ be a second order differentiable point of $u$ in $B_{1/3}(0)$, with quadratic approximation $Q(x)$ and error $\sigma$ at $y$.  By Savin's small perturbation theorem \cite[Theorem 1.3]{S}, we find a ball $B_\rho(y)$ with $\rho=\rho(n,Q,\sigma)$ on which $u$ is smooth, with estimates depending on $n,Q,\sigma$.  Using \cite{CNS}, we find smooth approximations $u_k\to u$ uniformly on $B_{4}(0)$, with $|Du_k(x)|\le C(\|u\|_{L^\infty(B_4(0))})$ in $B_3(0)$ by the gradient estimate in \cite{T2} and also \cite{CW}.  By the small perturbation theorem \cite[Theorem 1.3]{S}, it follows that $u_k\to u$ in $C^{2,\alpha}$ on $B_{\rho/2}(y)$.  Applying doubling \eqref{doubley} to $u_k$ with $r=\rho/2$, we find that $\Delta u_k\le C(n,Q,\sigma,\|u\|_{L^\infty(B_4(0))})$ on $B_{2}(0)$.  By Evans-Krylov, $u_k\to u$ in $C^{2,\alpha}(B_1(0))$.  It follows that $u$ is smooth on $B_1(0)$.

\smallskip
From interior regularity, a compactness proof for a Hessian estimate would then follow by an application of the small perturbation theorem.  Suppose $u_k\to u$ uniformly but $|D^2u_k(0)|\to\infty$.  We observe that the limit $u$ is interior smooth.  Applying Savin's small perturbation theorem to $u_k-u$, which solves a fully nonlinear elliptic PDE with smooth coefficients, implies a uniform bound on $D^2u_k(0)$ for large $k$, a contradiction.
\end{rem}

\begin{rem}
By combining Alexandrov Proposition \ref{prop:Alex} with [S, Theorem 1.3] as above, we find that general viscosity solutions of $\sigma_2=1$ on $B_1(0)\subset\re^n$ with $\Delta u>0$ have partial regularity: the singular set is closed with Lebesgue measure zero. The same partial regularity also holds for (k-convex) viscosity solutions of equation $\sigma_k=1$, because Alexandrov Proposition \ref{prop:Alex} is valid for such solutions as noted in Remark 4.1.  For (convex) viscosity solutions of the Monge-Amp\`ere equation $\sigma_n=1$, Mooney \cite{M1} showed that the Hausdorff dimension of the singular set is less than $n-1$.  For viscosity solutions of uniformly elliptic equations $F(D^2u)=0$, Armstrong, Silvestre, and Smart \cite{ASS} showed that the Hausdorff dimension of the singular set is at most $n-\epsilon$, where $\epsilon$ depends on $n$ and the constants of ellipticity.
\end{rem}

\section{Appendix}

\begin{proof}[Proof of Lemma \ref{lem:o(r^2)}]  

Choose $x\in B_4(0)$ for which conditions \eqref{gradi}, \eqref{Hess1}, and \eqref{Hess2} are valid.  Given $r>0$ small enough for $B_{2r}(x)\subset B_4(0)$, we just assume $x=0$.  Letting $\eta_\ep(y)=\ep^{-n}\eta(y/\ep)$ be the standard mollifier, we set $u^\ep(y)=\eta_\ep\ast u(y)$ for $|y|<r$.  Letting $Q^\ep(y)=u^\ep(0)+y\cdot Du^\ep(0)+y\cdot D^2u(0)\cdot y/2$, we use Taylor's theorem for the linear part:
$$
u^\ep(y)-Q^\ep(y)=\int_0^1(1-t)y\cdot [D^2u^\ep(ty)-D^2u(0)]\cdot y\,dt.
$$
Letting $\vp\in C^2_c(B_r(0))$ with $|\vp(y)|\le 1$, we average over $B_r=B_r(0)$:
\eqal{
\label{avg}
\dashint_{B_r}\vp(y)(u^\ep(y)-Q^\ep(y))dy&=\int_0^1(1-t)\left(\dashint_{B_r}\vp(y)y\cdot[D^2u^\ep(ty)-D^2u(0)]\cdot y\,dy\right)dt\\
&=\int_0^1\frac{1-t}{t^2}\left(\dashint_{B_{rt}}\vp(t^{-1}z)z\cdot[D^2u^\ep(z)-D^2u(0)]\cdot z\,dz\right)dt.
}
The first term converges to the Radon measure representation of the Hessian:
\begin{align*}
    g^\ep(t)&:=\int_{B_{rt}}\vp(t^{-1}z)z\cdot D^2u^\ep(z)\cdot z\,dz\\
    &\to\int_{B_{rt}}u(z)\pd_{ij}(z^iz^j\vp(t^{-1}z)dz\qquad\text{as }\ep\to0\\
    &=\int_{B_{rt}}\vp(t^{-1}z)z^iz^jd\mu^{ij}\\
    &=\int_{B_{rt}}\vp(t^{-1}z)z\cdot D^2u(z)\cdot z\,dz+\int_{B_{rt}}\vp(t^{-1}z)z^iz^jd\mu^{ij}_s.
\end{align*}
It also has a bound which is uniform in $\ep$:
\begin{align*}
    \frac{g^\ep(t)}{r^nt^{n+2}}&\le\frac{r^2}{(rt)^n}\int_{B_{rt}}|D^2u^\ep(z)|dz\\
    &=\frac{r^2}{(rt)^n}\int_{B_{rt}}\left|\int_{\re^n}D^2\eta_\ep(z-\zeta)u(\zeta)\right|dz\\
    &=\frac{r^2}{(rt)^n}\int_{B_{rt}}\left|\int_{\re^n}\eta_\ep(z-\zeta)d[D^2u](\zeta)\right|dz\\
    &\le \frac{Cr^2}{\ep^n(rt)^n}\int_{B_{rt+\ep}}|B_{rt}(0)\cap B_\ep(\zeta)|\,d\|D^2u\|(\zeta)\\
    &\le \frac{Cr^2}{\ep^n(rt)^n}\min(rt,\ep)^n\|D^2u\|(B_{rt+\ep})\\
    &\le Cr^2\frac{\|D^2u\|(B_{rt+\ep})}{(rt+\ep)^n}\\
    &\le Cr^2.
\end{align*}
In the last inequality, we used \eqref{Hess1} and \eqref{Hess2}, and denoted by $\|D^2u\|$ the total variation measure of $[D^2u]$.  Note also, by \eqref{gradi},
\begin{align*}
    |Du^\ep(0)-Du(0)|&\le \int_{B_\ep}\eta_\ep(z)|Du(z)-Du(0)|dz\\
    &\le C\dashint_{B_\ep}|Du(z)-Du(0)|dz\\
    &=o(1)_\ep.
\end{align*}
By the dominated convergence theorem, we send $\ep\to 0$ in \eqref{avg}:
\begin{align*}
    \dashint_{B_r}\vp(y)(u(y)-Q(y))dy&\le Cr^2\int_0^1\dashint_{B_{rt}}|D^2u(z)-D^2u(0)|dzdt+Cr^2\int_0^1\frac{\|[D^2u]_s\|(B_{rt})}{(rt)^n}dt\\
    &=o(r^2),
\end{align*}
using \eqref{Hess1} and \eqref{Hess2}.  Taking the supremum over all such $|\vp(y)|\le 1$, we conclude $\dashint_{B_r}|h(y)|dy=o(r^2)$.  This completes the proof.
\end{proof}

\begin{proof}[Proof of Lemma \ref{lem:pto(r^2)}]
Given $x\in B_4(0)$ such that \eqref{L^1} and \eqref{point} are true, we let $0<2r<4-|x|$ and $0<\ep<1/2$.  Then by \eqref{L^1},
\begin{align*}
    \left|\{z\in B_r(x):|h(z)|\ge \ep r^2\}\right|&\le \frac{1}{\ep r^2}\int_{B_r(x)}|h(z)|dz\\
    &=\ep^{-1}o(r^n)\\
    &< \ep|B_r(x)|,
\end{align*}
provided $r<r_0(\ep,n,h)$.  Then for each $y\in B_{r/2}(x)$, there exists $z\in B_r(x)$ such that
$$
|h(z)|\le \ep r^2\qquad\text{ and }\qquad |y-z|\le \ep r.
$$
By \eqref{point} and \eqref{L^1}, we obtain for such $y$,
\begin{align*}
    |h(y)|&\le |h(z)|+\frac{|h(y)-h(z)|}{|y-z|}\ep r\\
    &\le \ep r^2+C(n)\ep\dashint_{B_{2r}(x)}|h(\zeta)|d\zeta+C(n,h)\ep r^2\\
    &\le C(n,h)\ep r^2.
\end{align*}
We conclude $\sup_{B_{r/2}(x)}|h(y)|=o(r^2)$. 
\end{proof}

The following is standard, but for lack of reference, we include a proof.
\begin{lem}
\label{lem:conv}
If $u_k\to u$ is a uniformly convergent sequence of viscosity solutions on $B_1(0)$ of a fully nonlinear elliptic equation $F(D^2u,Du,u,x)=0$ continuous in all variables, then $u$ is a viscosity solution of $F$ on $B_1(0)$.
\end{lem}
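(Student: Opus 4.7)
The plan is the classical stability argument for viscosity solutions of fully nonlinear elliptic PDE: transfer a test function from $u$ back to $u_k$, apply the viscosity property of $u_k$, and pass to the limit using continuity of $F$. I sketch the subsolution direction; the supersolution side is identical. Adopt the convention that $u$ is a viscosity subsolution at $x_0\in B_1(0)$ if every $\varphi\in C^2$ with $u-\varphi$ attaining a local maximum at $x_0$ satisfies $F(D^2\varphi(x_0),D\varphi(x_0),u(x_0),x_0)\ge 0$ (the argument is insensitive to the sign convention).

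First I reduce to strict local maxima. Given such a test function $\varphi$, set
\[
\varphi_\delta(x):=\varphi(x)+\delta|x-x_0|^2,\qquad \delta>0,
\]
so that $u-\varphi_\delta$ attains a strict local maximum at $x_0$ on some closed ball $\overline{B_r(x_0)}\subset B_1(0)$.

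Next I produce approximating touching points for $u_k$. Let $x_k\in\overline{B_r(x_0)}$ maximize $u_k-\varphi_\delta$ on this ball. Uniform convergence $u_k\to u$ combined with strictness of the maximum at $x_0$ forces $x_k\to x_0$ and $u_k(x_k)\to u(x_0)$: any cluster point $\bar x$ of $\{x_k\}$ satisfies $(u-\varphi_\delta)(\bar x)\ge (u-\varphi_\delta)(x_0)$ by uniform convergence, and strictness then forces $\bar x=x_0$. For $k$ large, $x_k$ lies in the interior of $\overline{B_r(x_0)}$, so the viscosity subsolution property of $u_k$ applied with test function $\varphi_\delta$ at $x_k$ yields
\[
F\bigl(D^2\varphi_\delta(x_k),\,D\varphi_\delta(x_k),\,u_k(x_k),\,x_k\bigr)\ge 0.
\]

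Finally I pass to the limit. Continuity of $F$ in all four arguments, together with $x_k\to x_0$, $u_k(x_k)\to u(x_0)$, $D\varphi_\delta(x_k)\to D\varphi(x_0)$, and $D^2\varphi_\delta(x_k)\to D^2\varphi(x_0)+2\delta I$, gives $F(D^2\varphi(x_0)+2\delta I,D\varphi(x_0),u(x_0),x_0)\ge 0$ as $k\to\infty$. Sending $\delta\to 0$, once more by continuity of $F$, removes the $2\delta I$ perturbation and recovers the subsolution inequality for the original $\varphi$. No real obstacle arises: ellipticity (monotonicity of $F$ in $D^2u$) enters only implicitly in making the definition of viscosity solution meaningful, not in the limit passage itself. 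The only subtlety is the strict-maximum perturbation $\delta|x-x_0|^2$, which is the standard device ensuring the approximating maximizers $x_k$ actually converge to $x_0$.
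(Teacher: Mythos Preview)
Your proof is correct and follows essentially the same classical stability argument as the paper: perturb the test function by $\delta|x-x_0|^2$ to make the maximum strict, locate approximating maximizers for $u_k$, apply the viscosity property, and pass to the limit using continuity of $F$. The only cosmetic difference is that the paper packages the two limits into a single parameter $\ep$ (with an explicit $-\ep^4$ shift and a ring comparison to localize the maximum in $B_{\ep/2}(x_0)$), whereas you take $k\to\infty$ first via compactness and then $\delta\to0$; both are standard presentations of the same idea.
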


\begin{proof}
We show it is a subsolution.  Suppose for some $x_0\in B_1(0)$, $0<r<\text{dist}(x_0,\pd B_1(0))$, and smooth $Q$ that $Q\ge u$ on $B_r(x_0)$ with equality at $x_0$.  Set 
$$
Q_\ep=Q+\ep |x-x_0|^2-\ep^4.
$$
We observe that 
$$
u_k(x_0)-Q_\ep(x_0)\ge u(x_0)-Q(x_0)+\ep^4-o(1)_k>0
$$
for $k=k(\ep)$ large enough.  In the ring $B_{\ep}(x_0)\setminus B_{\ep/2}(x_0)$, we have 
$$
u_k(x)-Q_\ep(x)<u(x)-Q(x)-\ep^3/4+\ep^4+o(1)_k<0
$$
for $\ep=\ep(r)$ small enough, and $k=k(\ep)$ large enough.  This means the maximum of $u_k-Q_\ep$ occurs at some in $x_\ep\in B_{\ep/2}(x_0)$.  Since $u_k$ is a subsolution, we get
$$
0\le F(D^2Q_\ep(x_\ep),DQ_\ep(x_\ep),Q_\ep(x_\ep),x_\ep)\to F(D^2Q(x_0),DQ(x_0),Q(x_0),x_0),
$$
as $\ep\to 0$.  This completes the proof.
\end{proof}

\noindent
\bigskip
\textbf{Acknowledgments.}  Y.Y. is partially supported by an NSF grant.

\smallskip
\noindent
DEPARTMENT OF MATHEMATICS, PRINCETON UNIVERSITY, PRINCETON, NJ 08544-1000

\textit{Email address:} rs1838@princeton.edu

\smallskip
\noindent
DEPARTMENT OF MATHEMATICS, UNIVERSITY OF WASHINGTON, BOX 354350, SEATTLE, WA 98195

\textit{Email address:} yuan@math.washington.edu

\end{document}